\documentclass[a4paper,11pt]{article}
\usepackage[hidelinks]{hyperref}
\hypersetup{
	colorlinks=true,   
	linktoc=all,        
	linkcolor=red,     
	citecolor=blue,}     
\usepackage{lineno}
\usepackage{tikz}
\usepackage{graphicx}
\usepackage{subfigure}
\usepackage{amssymb}
\usepackage{latexsym,bm}
\usepackage{multicol}
\usepackage{indentfirst}
\usepackage{amssymb,amsfonts}
\usepackage{amsmath}
\usepackage{amsthm}
\usepackage{setspace}
\usepackage{hyperref}
\usepackage{float} 
\usepackage{tikz}
\newcommand{\ignore}[1]{}

\newtheorem{theorem}{Theorem}[section]
\newtheorem{lemma}[theorem]{Lemma}
\newtheorem{claim}[theorem]{Claim}
\newtheorem{fact}[theorem]{Fact}
\newtheorem{corollary}[theorem]{Corollary}

\newtheorem{conjecture}[theorem]{Conjecture}

\newtheorem{observation}[theorem]{Observation}

\begin{document}
	\date{}
	\begin{spacing}{1.03}
		\title{{A step towards the Erd\H{o}s–Rogers problem 
        }}
		\author{Longma Du,\footnote{School of Mathematics, Shandong University, Jinan, 250100, P.~R.~China. Email: {\tt 202520303@mail.sdu.edu.cn}. }
       \;\; \;  Xinyu Hu,\footnote{Data Science Institute, Shandong University, Jinan, 250100, P.~R.~China. Email: {\tt huxinyu@sdu.edu.cn}. Supported by National Postdoctoral Fellowship Program (C-tier) (GZC20252005).}
			\;\; \; Ruilong Liu,\footnote{School of Mathematics, Shandong University, Jinan, 250100, P.~R.~China. Email: {\tt liuruilong@mail.sdu.edu.cn}. }
\;\; \; Guanghui Wang \footnote{
School of Mathematics, Shandong University, Jinan, 250100, P. R. China. Email: {\tt ghwang@sdu.edu.cn}. Supported by the Natural Science Foundation of China (12231018) and State Key Laboratory of Cryptography and Digital Economy Security.}
}
\maketitle	
		
\begin{abstract}
For $2\le k\le t<s$, the Erd\H{o}s-Rogers function $f^{(k)}_{t,s}(N)$ denotes the largest $m$ such that every $K^{(k)}_s$-free $k$-graph on $N$ vertices contains a $K^{(k)}_t$-free induced subgraph on $m$ vertices. 
Mubayi and Suk (\textit{J. London Math. Soc. 2018})  conjectured that $f^{(k)}_{k+1,k+2}(N)=(\log_{(k-2)}N)^{\Theta(1)}$ for $k\ge 4$, where $\log_{(i)}$ denotes the $i$-fold iterated logarithm. This is equivalent to the statement that $f^{(k)}_{k+1,s}(N)=(\log_{(k-2)}N)^{\Theta(1)}$ for every $s\ge k+2$.

In this paper, we introduce multi‑color patterns into a random construction of a $2$-graph to build a $4$-graph, and  for the first time, combine them with multi‑layer extremum structures to prove  that $f^{(4)}_{5,s}(N)=(\log \log N)^{\Theta(1)}$ for every $s\ge 11$. More generally, using a variant of the Erd\H{o}s-Hajnal stepping-up lemma, we also establish that $f^{(k)}_{k+1,s}(N)=(\log_{(k-2)}N)^{\Theta(1)}$ for every $s\ge k+7$.

\medskip

\textbf{Keywords:} Hypergraph Ramsey number; Erd\H{o}s-Rogers function; Stepping-up lemma 
\end{abstract}
	
\section{Introduction}
A $k$-uniform hypergraph $H$ ($k$-graph for short) is a pair consisting of a set of vertices $V(H)$ and a collection of $k$-element subsets of $V(H)$. Let $K_n^{(k)}$ be the complete $k$-graph on $n$ vertices. The off-diagonal Ramsey number \cite{R} $r_k(s,n)$, which has been extensively studied when $k$ and $s$ are fixed, is the minimum $N$ such that every red/blue coloring of the edges of $K_N^{(k)}$ results in a monochromatic red copy of $K_s^{(k)}$ or a monochromatic blue copy of $K_n^{(k)}$. All logarithms are to base $2$ unless otherwise stated.

The off-diagonal Ramsey number has been extensively studied since 1935 \cite{E-S-1}, with many classic results \cite{A-K-S-1,A-K-S-2,B-K-2,C-G-E,E-H-Con,E-R-2,K-1,L-R-Z,M-S-3,M-S-4,S-1}.
In recent years, there have been many breakthroughs regarding off-diagonal Ramsey numbers, especially in graphs \cite{C-J-M-S,C-F-S-3,H-H-K-P,H-M-S,M-S-X,M-V-2}. Indeed, there have been significant recent breakthroughs in diagonal Ramsey numbers as well (see, e.g., \cite{C-G-M-S,G-N-N-W}). This work, however, focuses on the off-diagonal case, and diagonal Ramsey numbers are therefore not covered.

Campos, Jenssen, Michelen and Sahasrabudhe \cite{C-J-M-S} showed that $r_2(3,n)\ge (\frac{1}{3}+o(1))\frac{n^2}{\log n}$, which improved the results of Fiz Pontiveros, Griffiths and Morris \cite{P-G-M}. 
Shortly afterwards, Hefty, Horn, King and Pfender \cite{H-H-K-P} improved the lower bound $r_2(3,n)\ge (\frac{1}{2}+o(1))\frac{n^2}{\log n}$. Generally, the previous best bounds \cite{A-K-S-1,B-K-2,L-R-Z,SP-1} for $r_2(s,n)$ with fixed $s\ge 3$ are $\tilde{\Omega}(n^{\frac{s+1}{2}})\le r_2(s,n)\le  \tilde{O}(n^{s-1})$. A breakthrough result of Mattheus and Verstra\"{e}te \cite{M-V-2} showed that $r_2(4,n)\ge\Omega(\frac{n^3}{\log^4n})$.
 In particular, Erd\H{o}s \cite{C-G-E} conjectured that $r_2(s,n)= {\tilde{\Theta}}(n^{s-1})$, where $\tilde{O}$, $\tilde{\Omega}$, and $\tilde{\Theta}$ suppress factors of the form $(\log n)^{\Theta(1)}$.

For $3$-graphs, Conlon, Fox and Sudakov \cite{C-F-S-3} obtained that for every $s\ge4$, $2^{\Omega(n\log n)}\le r_3(s,n)\le 2^{O(n^{s-2}\log n)}$, which improve the upper bound by Erd\H{o}s and Rado \cite{E-R-2} and the lower bound by
Erd\H{o}s and Hajnal \cite{E-H-Con}. For $k\ge 4$, a fundamental and important conjecture about $r_k(s,n)$ was proposed by Erd\H{o}s and Hajnal \cite{E-H-Con}, who conjectured that $r_k(s,n)=twr_{k-1} (n^{\Theta(1)})$ for each $k\ge4$ and $s\ge k+1$, where the tower function $twr_{k}(x)$ is defined by $twr_{1}(x)=x$ and $twr_{i+1}(x)=2^{twr_{i}(x)}$. Up to now, the conjecture has been almost proven by Mubayi and Suk \cite{M-S-3,M-S-4}, as well as Conlon, Fox and Sudakov, and others, except for $s=k+1$. In particular, Mubayi and Suk \cite{M-S-3} used a variant of the classic Erd\H{o}s-Hajnal stepping-up lemma to prove that in order to completely solve the conjecture, one only needs to prove that $r_4(5,n)=2^{2^{n^{\Omega(1)}}}$.

A \textit{$t$-independent set} in a $k$-graph $G$ is a vertex subset that contains no copy of $K^{(k)}_t$. If $t=k$, then it is just an independent set. Let $\alpha_t(G)$ denote the size of the largest $t$-independent set in $G$. For $k\le t<s<N$, the \textit{Erd\H{o}s-Rogers function} $f^{(k)}_{t,s}(N)$ denotes the largest $m$ such that every $K^{(k)}_s$-free $k$-graph on $N$ vertices contains a $K^{(k)}_t$-free induced subgraph on $m$ vertices. 
Namely, $$f^{(k)}_{t,s}(N)=\min\{\alpha_t(G): |V(G)|=N~\text{and}~G~\text{is}~K^{(k)}_s\text{-free}\}.$$Clearly, $f^{(k)}_{t',s}(N)\le f^{(k)}_{t,s}(N) $ if $t'\le  t$ and $f^{(k)}_{t,s}(N)\le f^{(k)}_{t,s'}(N) $ if $s'\le s$.

\medskip

The Erd\H{o}s-Rogers function has been extensively studied in the last 60 years (see, e.g., \cite{ak,bh,D-R-R,dr,E-R-1,GJ20,H-L,Janzer,kri-1,kri-2,Sud-1,Sud-2,W-1}). We refer the reader to Mubayi and Suk \cite{M-S-1} for a nice survey on this topic. 

The problem of determining $f^{(k)}_{k,s}(N)$ is equivalent to determining the Ramsey number $r_k(s,n)$. Formally,
\begin{align}\label{R-EG}
r_k(s,n)=\min\{N: f^{(k)}_{k,s}(N)\ge n\}.
\end{align}

Note that the case $t=k$ recovers the usual Ramsey problem, as in (\ref{R-EG}). Thus, we focus on $f^{(k)}_{t,s}(N)$ with $t>k$. For $k=2$, the first lower bound was given by Bollob\'{a}s and Hind \cite{bh}, who showed that $f^{(2)}_{t,s}(N)\ge N^{1/(s-t+1)}$. Krivelevich \cite{kri-1,kri-2} improved these lower bounds by a small logarithmic factor and gave the general upper bound $f^{(2)}_{t,s}(N)\le O(N^{\frac{t}{s+1}}(\log N)^{\frac{1}{t-1}})$. Later, the lower bound was
significantly improved by Sudakov \cite{Sud-1,Sud-2} for every $s\ge t+2$. In particular, for $f^{(2)}_{t,t+1}(N)$, the presently best lower and upper bounds are as follows:
For each $t\ge3$,
$\Omega(N^{1/2}(\frac{\log N}{\log \log N})^{1/2})<f^{(2)}_{t,t+1}(N)<O(N^{1/2}\log N)$,
where the lower bound was obtained by Dudek and Mubayi \cite{D-M} and the upper bound was recently obtained by Mubayi and Verstra\"{e}te \cite{M-V-1}. For $f^{(2)}_{t,t+2}(N)$, Sudakov \cite{Sud-1} showed $f^{(2)}_{t,t+2}(N)\ge N^{\frac{3t-4}{6t-6}}(\log N)^{\Omega(1)}$, which is the currently best lower bound. However, the presently best upper bound is $f^{(2)}_{t,t+2}(N)\le O(N^{\frac{2t-3}{4t-5}}(\log N)^3)$, which was recently obtained by Janzer and Sudakov \cite{Janzer}.

The problem of estimating the Erd\H{o}s-Rogers function for $t\ge k\ge3$ seems much harder. Dudek and Mubayi \cite{D-M} proved that $\Omega((\log_{(k-2)} N)^{1/4+o(1)})\le f^{(k)}_{t,t+1}(N)\le O((\log N)^{1/(k-2)})$ by showing \begin{align}\label{ditui}
    f^{(k-1)}_{t-1,s-1}(\lfloor(\log N)^{1/(k-1)}\rfloor)\le f^{(k)}_{t,s}(N)
\end{align}
 for $3\le k\le t<s$, where $\log_{(0)}(N)=N$ and as usual, $\log_{(i+1)}N=\log (\log _{(i)}N)$. An interesting question is to determine the exponent of $\log N$ in $f^{(3)}_{t,t+1}(N)$. Conlon, Fox and Sudakov \cite{C-F-S-2} improved that $f^{(k)}_{t,t+1}(N)\ge \Omega((\log_{(k-2)} N)^{1/3+o(1)})$ by showing $f^{(3)}_{t,t+1}(N)\ge \Omega((\log N)^{1/3+o(1)})$, and they also proposed to close the gap between the upper and lower bounds. A first major step toward closing this gap was obtained by Mubayi and Suk \cite[Theorem 3.2]{M-S-5}, who showed that $f^{(k)}_{k+1,k+2}(N)\le O(\log_{(k-13)} N)$ for $k\ge 14$. They proposed a conjecture: $f^{(k)}_{k+1,k+2}(N)=(\log_{(k-2)}N)^{\Theta(1)}$ for each integer $k\ge 4$, which is equivalent to the following conjecture once we note that 
\begin{align}\label{low-bound}
    (\log_{(k-2)}N)^{\Omega(1)}\le f^{(k)}_{k+1,s}(N)\le f^{(k)}_{k+1,k+2}(N)
\end{align}
for $k\ge 3$ and $s\ge k+2$. Here, the first inequality follows from (\ref{ditui}) and $f^{(2)}_{3,s}(N)\ge N^{1/(s-2)}$, which was shown by Bollob\'{a}s and Hind \cite{bh}.

\begin{conjecture}[Rephrasing of a conjecture of Mubayi and Suk  in \cite{M-S-5}]
\label{con-1}
For each integer $k\ge 4$ and $s\ge k+2$, $$f^{(k)}_{k+1,s}(N)=(\log_{(k-2)}N)^{\Theta(1)}.$$
\end{conjecture}

Recently, Fan, Hu, Lin and Lu \cite{F-H-L-L} showed that  $f^{(k)}_{k+1,s}(N)\le 
O( \log_{(k-3)} N)$
for $k \ge 5$ and $s\ge k+2$, which together with (\ref{low-bound}) implies that there is only one `$\log$' gap to solve Conjecture \ref{con-1}. Moreover, they proved a variant of the Erd\H{o}s-Hajnal stepping-up lemma to obtain that in order to solve Conjecture \ref{con-1}, it suffices to show, for  every  $s\ge 6$, that
\begin{align}\label{con-2}
    f^{(4)}_{5,s}(N)=(\log\log N)^{O(1)}.
\end{align}
To prove (\ref{con-2}), one must construct a $K^{(4)}_s$-free $4$-graph $H$ of order $2^{2^{\Theta(n)}}$ with $\alpha_5(H)<\text{poly}(n)$.

In this paper, we first provide a probabilistic method to find a red/blue/green coloring $\phi$ of the pairs of $[2^{\Theta(n)}]$ that satisfies a  certain ``good'' property (see Lemma \ref{phi-11}). Based on  $\phi$, we then construct a $4$-graph $H$ on  $2^{2^{\Theta(n)}}$ vertices , which is $K^{(4)}_s$-free for every $s\ge 11$ and satisfies $\alpha_5(H)<2^{11}n^{11}+1$,  by analyzing the $11$-layer local maxima sequences in the stepping-up construction. This yields a new upper bound $f^{(4)}_{5,s}(N)\le (\log\log N)^{O(1)}$ for  every $s\ge 11$. Recall that $f^{(4)}_{5,s}(N)\ge (\log\log N)^{\Omega(1)}$ by (\ref{low-bound}),  we can obtain the following theorem.
\begin{theorem}\label{main-results}
  For  every $s\ge 11$, we have $f^{(4)}_{5,s}(N)= (\log\log N)^{\Theta(1)}$.
\end{theorem}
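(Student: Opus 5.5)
The lower bound $f^{(4)}_{5,s}(N)\ge(\log\log N)^{\Omega(1)}$ is already available from (\ref{low-bound}) with $k=4$. It follows from (\ref{ditui}) together with the Bollob\'as--Hind bound $f^{(2)}_{3,s}(N)\ge N^{1/(s-2)}$. So the whole task is the upper bound. Since $f^{(4)}_{5,s}(N)\le f^{(4)}_{5,11}(N)$ for $s\ge 11$, it suffices to treat $s=11$. Concretely, for every $n$ I want a $K^{(4)}_{11}$-free $4$-graph $H$ on $N=2^{2^{\Theta(n)}}$ vertices with $\alpha_5(H)\le 2^{11}n^{11}$. Monotonicity of $f$ in $N$ then gives $f^{(4)}_{5,11}(N)\le (\log\log N)^{O(1)}$ for all $N$.

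\textbf{Step 1: a random three-colouring.} Let $M=2^{cn}$. Colour the pairs of $[M]$ red, blue or green at random, possibly with a planted pattern, for instance a random graph with carefully chosen densities. I would show that with positive probability the colouring $\phi$ is ``good'' in the sense of Lemma \ref{phi-11}. Roughly, this means two things. First, there is no monochromatic (or no specific-pattern) configuration of bounded size that would later create a $K^{(4)}_{11}$. Second, every vertex set of size about $n$ contains the multicolour patterns needed to force a $K^{(4)}_5$. The proof is a first-moment and union-bound computation over $\binom{M}{n}$ sets, each of which fails with probability $2^{-\Omega(n^2)}$. This works because $M=2^{cn}$ with $c$ small.

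\textbf{Step 2: stepping up twice.} Identify the vertex set with $\{0,1\}^{M}$, so $N=2^{M}=2^{2^{cn}}$. For $a<b$ let $\delta(a,b)$ be the highest bit in which they differ. A $4$-set $v_1<\dots<v_4$ gives the sequence $\delta_1,\delta_2,\delta_3$ of consecutive $\delta$-values. Declare it an edge of $H$ according to the local extremum structure of $\delta_1,\delta_2,\delta_3$ and the $\phi$-colours of the pairs among them. For example, when $\delta_1<\delta_2>\delta_3$ the edge depends on the colour of $\{\delta_1,\delta_3\}$ and on a second-level comparison. This is the multi-layer extremum structure: one takes local maxima of the $\delta$-sequence, then local maxima of that sequence, and so on, up to $11$ layers.

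\textbf{Step 3: $K^{(4)}_{11}$-freeness.} Take $11$ vertices and consider their $10$ $\delta$'s. Iterating the local-maximum operation, pass to a short subsequence on which the edge rule reduces to a condition on $\phi$. A clique in $H$ would then force a forbidden configuration under $\phi$, contradicting goodness. This is where the number $11$ comes from.

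\textbf{Step 4: bounding $\alpha_5$.} Let $S$ be a set of $m=2^{11}n^{11}+1$ vertices. Applying the standard Erd\H{o}s--Szekeres-type halving argument repeatedly, I would extract either a monotone run of length $n$ among the $\delta$'s or, at some layer, a set of about $n$ local maxima. Each such outcome yields about $n$ indices in $[M]$. Goodness of $\phi$ then provides a colour pattern whose preimage is a $K^{(4)}_5$ inside $S$. I expect this to be the main obstacle. Every shape of the $\delta$-sequence of five vertices (monotone, or the various peak and valley shapes at each layer) must be matched with a colour-pattern requirement. Those requirements must also be simultaneously compatible with the $K_{11}$-freeness conditions in Step 3. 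My first attempt would be to design $\phi$ so that red forces edges for monotone shapes, blue for single peaks, and green for higher-layer peaks, and then verify the case table exhaustively.
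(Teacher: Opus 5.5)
Your outer frame is right: the lower bound from (\ref{low-bound}), the reduction to $s=11$, a random colouring of pairs of $[2^{cn}]$ with per-set failure probability $2^{-\Omega(n^2)}$, and one stepping-up to $N=2^{2^{cn}}$. However, the proposal never fixes an edge rule or a colour pattern, and the two mechanisms you do commit to would not work.

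\textbf{Step 3.} You want $K^{(4)}_{11}$-freeness to come from $\phi$ avoiding some bounded configuration. Your union bound forces every colour to have constant density: an $n$-set misses a colour of density $p$ with probability $(1-p)^{\binom n2}$, and this must beat $\binom{2^{cn}}{n}\approx 2^{cn^2}$. With constant densities, a random colouring of $2^{cn}$ points contains every fixed finite ordered coloured configuration. So clique-freeness has to hold for \emph{every} $\phi$: the edge rule must make any large clique demand incompatible colours on one pair. This is what the paper does.
\begin{itemize}
\item Every ($\Xi$)-edge has $\phi(\delta_1,\delta_3)\ne\text{red}$ but at least one of $\phi(\delta_1,\delta_2),\phi(\delta_2,\delta_3)$ red. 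This excludes a $5$-term monotone $\delta$-run inside a clique (Claim~\ref{no-mono5}).
\item ($\Gamma_1$)--($\Gamma_5$), together with ($\Xi$), exclude a decreasing $3$-run left of the maximum $\delta_k$ and an increasing one right of it (Claims~\ref{no-left3}, \ref{no-right3}).
\item A case split on $\phi(\delta_j,\delta_k)$ then leaves at most four $\delta$'s on each side of $\delta_k$, so $s\le 10$.
\end{itemize}
No layering enters here, so your account of where the $11$ comes from is off; the eleven layers belong to the $\alpha_5$ argument. Likewise, a layered structure cannot live in the edge rule of Step 2, which only sees three $\delta$'s. Also, in the paper ($\Lambda$) ($\delta_1<\delta_2>\delta_3$) is an edge unconditionally, and this matters below.

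\textbf{Step 4.} You propose to feed about $n$ local maxima back into the goodness of $\phi$. That would need colour patterns for non-monotone shapes that are compatible with clique-freeness, which you leave open. The paper uses goodness only for monotone shapes. In the pattern of Lemma~\ref{phi-11}, consecutive pairs are red, pairs at distance two are blue and the outer pair is green. This makes every $4$-subset of the corresponding five vertices a ($\Xi_1$), ($\Xi_2$) or ($\Xi_3$) edge. Hence no layer has a consecutive monotone run of length $n$ (Lemma~\ref{bad-11}), each passage to local maxima shrinks by at most a factor $2n$, and $m=(2n)^{11}+1$ leaves a nested chain of peaks after eleven rounds.

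The $K^{(4)}_5$ is then forced for \emph{every} colouring, with no further use of goodness.
\begin{itemize}
\item Because ($\Lambda$) is automatic, $5$-sets such as $\delta(v_{a-1},v_a,v_d,v_{d+1},v_{c+1})=(\delta_{a-1}<\delta_a>\delta_d<\delta_c)$ need only two ($\Gamma$)-conditions.
\item Claim~\ref{blue} descends four layers below a peak and applies ($\Gamma_4$), ($\Gamma_2$), ($\Gamma_3$), ($\Gamma_1$) in turn to show $\phi(\delta_a,\delta_c)\ne\text{blue}$.
\item Claim~\ref{green} repeats this with ($\Gamma_5$) four layers higher to exclude green.
\item Consequently the nested peaks at layers $11,10,9,8$ satisfy $\phi(\delta_a,\delta_b)=\phi(\delta_a,\delta_c)=\phi(\delta_a,\delta_d)=\phi(\delta_c,\delta_d)=\text{red}$. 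Then ($\Gamma_3$), followed by ($\Gamma_1$), together with ($\Lambda$), produces a $K^{(4)}_5$.
\end{itemize}
This colour-independent forcing across a bounded number of layers is the missing idea. Without it, and without an explicit edge rule, neither half of the upper bound is actually established.
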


More generally, by applying a variant of the Erd\H{o}s-Hajnal stepping-up lemma, we obtain the following corollary.

\begin{corollary}\label{main-results-2}
For each $k\ge5$ and $s\ge k+7$, we have $f^{(k)}_{k+1,s}(N)= (\log_{(k-2)} N)^{\Theta(1)}$. 
\end{corollary}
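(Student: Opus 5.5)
The lower bound $f^{(k)}_{k+1,s}(N)\ge(\log_{(k-2)}N)^{\Omega(1)}$ is already given by (\ref{low-bound}), so only the upper bound $f^{(k)}_{k+1,s}(N)\le(\log_{(k-2)}N)^{O(1)}$ for $s\ge k+7$ needs proof. I would argue by induction on $k\ge 4$. The base case $k=4$ is Theorem \ref{main-results}, which gives $f^{(4)}_{5,11}(N)\le(\log\log N)^{O(1)}$; since $s=k+7=11$ at $k=4$, this is exactly the starting point. Monotonicity in $s$, namely $f^{(k)}_{t,s}\le f^{(k)}_{t,s'}$ for $s\le s'$, lets me work only with $s=k+7$ throughout.

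For the inductive step I would use the variant of the Erd\H{o}s--Hajnal stepping-up lemma of Fan, Hu, Lin and Lu \cite{F-H-L-L}, the one quoted above as reducing Conjecture \ref{con-1} to (\ref{con-2}). I will use it in the following form. Suppose there is a $K^{(k-1)}_{s-1}$-free $(k-1)$-graph $G$ on $N_0$ vertices with $\alpha_{k}(G)<m$. Then there is a $K^{(k)}_{s}$-free $k$-graph $H$ on $2^{N_0}$ vertices with $\alpha_{k+1}(H)\le \mathrm{poly}(m)$, perhaps $O(m)$ or $m^{O(1)}$ depending on the precise statement. The construction identifies vertices of $H$ with binary strings of length $N_0$. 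For a $k$-set $v_1<\dots<v_k$ one sets $\delta_i=\delta(v_i,v_{i+1})$. The $k$-set is an edge of $H$ when the $\delta$-sequence is monotone and $\{\delta_1,\dots,\delta_{k-1}\}\in E(G)$, and otherwise it is decided by the local-extrema pattern of the $\delta$-sequence.

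Two properties must be checked. First, a $K^{(k)}_s$ in $H$ would force either a $K^{(k-1)}_{s-1}$ in $G$ among the $\delta$'s along a monotone run, or a forbidden extremal pattern. Second, a $(k+1)$-independent set of size $\mathrm{poly}(m)$ contains a long monotone $\delta$-subsequence whose $\delta$'s would form a $k$-independent set of $G$ of size $m$. I would take this lemma as available from \cite{F-H-L-L} and only check that the shift $s\mapsto s-1$ matches our parameters: from $(k-1,\,(k-1)+7)$ to $(k,\,k+7)$ the offset $s-k=7$ is preserved.

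Iterating, start from a $K^{(4)}_{11}$-free $4$-graph on $N_4=2^{2^{\Theta(n)}}$ vertices with $\alpha_5<n^{O(1)}$. After $k-4$ steps we get a $K^{(k)}_{k+7}$-free $k$-graph on $N_k=twr_{k-1}(\Theta(n))$ vertices with $\alpha_{k+1}\le n^{C_k}$, where $C_k$ grows with $k$ but is constant for fixed $k$. Since $\log_{(k-2)}N_k=2^{\Theta(n)}\ge n$, this gives $f^{(k)}_{k+1,k+7}(N_k)\le(\log_{(k-2)}N_k)^{O(1)}$. For general $N$, choose $n$ maximal with $N_k\le N$ and pad with isolated vertices, or take an induced subgraph, using monotonicity of $f$ in $N$. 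The loss is only a constant factor inside the tower, which is absorbed into the $\Theta(1)$ exponent.

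The main obstacle is making sure the stepping-up variant applies with exactly these parameters. Specifically, the clique number must increase by exactly one per step, so that $k+7$ is preserved rather than degrading, and the independence bound must blow up only polynomially. If the lemma in \cite{F-H-L-L} is stated only for $s=k+2$ or for general $s$ with a shift by one, I would recheck the clique-freeness argument. The key point there is that a $K_s$ in $H$ whose $\delta$-sequence is not monotone is killed by the non-monotone edge rules, exactly as in Mubayi--Suk's stepping-up.
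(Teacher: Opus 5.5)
Your outline is correct and follows the paper's route. The lower bound comes from (\ref{low-bound}). The upper bound is proved by induction on $k$ from Theorem~\ref{main-results}, using a stepping-up lemma that sends $(k-1,s-1)$ to $(k,s)$ with polynomial loss, so the offset $s-k=7$ is preserved.

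\textbf{The main difference.} You import the stepping-up lemma from Fan--Hu--Lin--Lu as a black box. The paper instead proves its own, weaker Lemma~\ref{step}: $f^{(k)}_{k+1,s}(2^N)<4(f^{(k-1)}_{k,s-1}(N))^2$ for $k\ge 5$, $s\ge k+7$. The paper itself calls the Fan--Hu--Lin--Lu result closely related and stronger, so citing it is legitimate once you confirm it covers $s=k+7$. Reproving a weaker version buys self-containment and a short proof.

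\textbf{Your sketch of the lemma misses part of the work.} The $\alpha_{k+1}$ bound is a dichotomy. Either some $n+1$ \emph{consecutive} $\delta$'s are monotone, in which case rule (i) turns a $K^{(k-1)}_k$ of $G'$ among them into a $K^{(k)}_{k+1}$. A merely monotone subsequence is not enough, since it need not be realized by a vertex subset. Or there are at least $4n$ local extrema. In the paper's construction the non-monotone rules ($m(e)=k-3$, plus $m(e)=k-4$ when $k\ge 6$) are then generous enough to build a $K^{(k)}_{k+1}$ on an alternating run of extrema.

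That generosity is exactly why clique-freeness is not ``exactly as in Mubayi--Suk.'' The paper first rules out four consecutive monotone $\delta$'s (Observation~\ref{k-obser}). It then finds among $v_1,\dots,v_{13}$ a $6$-set $S$ with $m(S)=1$, $n(S)=2$ (Claim~\ref{k-claim}). Appending $v_{14},\dots,v_{k+7}$ gives a $k$-set with $1\le m\le k-5$, which is a non-edge. This is where $s\ge k+7$ enters.

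\textbf{Bookkeeping slips.}
\begin{itemize}
\item Monotonicity in $s$ runs the other way: $f^{(k)}_{t,s'}(N)\le f^{(k)}_{t,s}(N)$ for $s\le s'$, because every $K_s$-free graph is $K_{s'}$-free. This correct direction is what justifies restricting to $s=k+7$.
\item $\log_{(k-2)}N_k=\Theta(n)$, not $2^{\Theta(n)}$. This is harmless.
\item For general $N$, choosing $n$ maximal with $N_k\le N$ and padding with isolated vertices fails, since the added vertices form a $(k+1)$-independent set of size $N-N_k$. Instead take $n$ minimal with $N_k(n)\ge N$ and pass to an induced subgraph, using that $f$ is nondecreasing in $N$. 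Alternatively, as the paper does, induct on the function bound directly via $f^{(k)}_{k+1,s}(N)\le f^{(k)}_{k+1,s}(2^{\lceil\log N\rceil})$.
\end{itemize}
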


Together with Theorem~\ref{main-results}, this confirms Conjecture~\ref{con-1} for all $k\ge 4$ and $s\ge k+7$.

\section{Properties of the stepping-up technique}\label{sec2}
In this paper, we will apply several variants of the Erd\H{o}s-Hajnal stepping-up lemma. We shall use the following notation and definitions unless otherwise stated. For integers $m$ and $n$, let $[n]=\{1,2,\ldots,n\}$ and $[m,n]=\{m,m+1,\ldots,n\}$.

Given a natural number $D$, let $V = \{0, 1, \ldots, 2^D - 1\}$. Then for any $v \in V$, write
$v = \sum_{i=0}^{D-1} v(i)2^i$ where  $v(i) \in \{0, 1\}$ for each  $i$. 
For any $u \neq v$, let $\delta(u, v)$ denote the largest $i \in \{0, 1, \ldots, D - 1\}$ such that $u(i) \neq v(i)$. We always use $\langle v_1, v_2,\ldots, v_r\rangle$ to denote an ordered set with $v_1< v_2<\cdots<v_r$. Given any set $S=\langle v_1, v_2,\ldots, v_r\rangle$, we always write $\delta_i=\delta(v_i,v_{i+1})$ for $i\in[r-1]$, and $\delta(S)=(\delta_1,\ldots,\delta_{r-1})=(\delta_i)_{i=1}^{r-1}$.  Let $(S)_i^j=\langle v_i,\dots,v_j\rangle\subset S$.
For convenience, if inequalities are known between consecutive $\delta$’s, this will be
indicated in the sequence by replacing the comma with the respective sign. For instance, suppose that $S=\langle v_1,\ldots, v_5\rangle$ and $\delta_1< \delta_2 >\delta_3< \delta_4$. Then, since $\delta(v_1,v_2,v_3,v_4)=(\delta_1,\delta_2,\delta_3)$
has $\delta_1< \delta_2 >\delta_3$, we write
$\delta(v_1,v_2,v_3,v_4)=(\delta_1<\delta_2>\delta_3)$.
Similarly, if not all inequalities are known, as in $\delta(v_1,v_2,v_4,v_5)$, we write $\delta(v_1,v_2,v_4,v_5)=(\delta_1<\delta_2~,~\delta_4)$.

We say that $\delta_i$ is a \textit{local minimum} if
$\delta_{i-1}>\delta_i<\delta_{i+1}$, a \textit{local maximum} if $\delta_{i-1}<\delta_i>\delta_{i+1}$, and a \textit{local extremum} if it is either a local minimum or a local maximum. We call $\delta_i$ a \textit{local monotone} if $\delta_{i-1}<\delta_i<\delta_{i+1}$ or $\delta_{i-1}>\delta_i>\delta_{i+1}$. We say $\delta_1,\ldots,\delta_{r-1}$
form a monotone sequence if $\delta_1<\cdots<\delta_{r-1}$ (monotonically increasing) or $\delta_1>\cdots>\delta_{r-1}$
(monotonically decreasing), i.e., there is no local extremum.

For any set $S=\langle v_1,\ldots,v_r \rangle$ where $r\ge 4$, let $m(S)$ and $n(S)$ denote the number of local extrema and local monotones in the sequence $\{\delta_1,\ldots, \delta_{r-1}\}$, respectively. Since for $i\in [2,r-2]$, each $\delta_i$ is either a local extremum or a local monotone, we have $m(S) + n(S)=r-3$.

Note that we have the following stepping-up properties, see \cite{G-R-S-1}.\medskip

\textbf{Property I.} For every triple $u < v < w$, $\delta(u,v) \neq \delta(v,w)$.
\medskip

\textbf{Property II.} For $v_1 < \cdots < v_r$, $\delta(v_1,v_r) = \max_{1 \leq j \leq r-1} \delta(v_j,v_{j+1})$.

\begin{fact}\label{fact}
    Let $\{\delta_{i_j}\}_{j=1}^{s}\subset \{\delta_t\}_{t=1}^{r-1}$. If $\delta_{i_j}\neq\delta_{i_{j+1}}$ for all $j$, then any non-monotone sequence $\{\delta_{i_j}\}_{j=1}^{s}$ contains a local extremum.
\end{fact}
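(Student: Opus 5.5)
The plan is a direct argument from the definitions; only the definition of local extremum and the hypothesis $\delta_{i_j}\neq\delta_{i_{j+1}}$ are needed, with no stepping-up property. Here a local extremum of the subsequence means an index $j\in[2,s-1]$ with $\delta_{i_{j-1}}<\delta_{i_j}>\delta_{i_{j+1}}$ or $\delta_{i_{j-1}}>\delta_{i_j}<\delta_{i_{j+1}}$. This is the same notion as in the paper, applied to the subsequence. I will prove the contrapositive: if the subsequence $\{\delta_{i_j}\}_{j=1}^{s}$ has no local extremum, then it is monotone.

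First, for each $j\in[s-1]$ the hypothesis $\delta_{i_j}\neq\delta_{i_{j+1}}$ gives a strict comparison between consecutive terms. So each consecutive pair has a well-defined sign $\epsilon_j\in\{<,>\}$. If $s\le 2$, the sequence is trivially monotone, and this degenerate case is harmless since the conclusion is vacuous there.

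Next, suppose there is no local extremum. I claim by induction on $j$ that $\epsilon_j=\epsilon_1$ for all $j\in[s-1]$. Assume $\epsilon_{j-1}=\epsilon_1$ and, say, $\epsilon_1$ is $<$. If $\epsilon_j$ were $>$, then $\delta_{i_{j-1}}<\delta_{i_j}>\delta_{i_{j+1}}$, so $\delta_{i_j}$ would be a local maximum, a contradiction. The case where $\epsilon_1$ is $>$ is symmetric and produces a local minimum. Hence all the signs agree, the sequence is strictly increasing or strictly decreasing, and so it is monotone, which contradicts the assumption that it is non-monotone.

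Equivalently, I can phrase this directly: take the first index $j$ where $\epsilon_j\neq\epsilon_{j-1}$. Such a $j$ exists because the sequence is non-monotone and all comparisons are strict. At that $j$, the term $\delta_{i_j}$ is a local extremum. There is no real obstacle here. The only point needing care is that the hypothesis of distinct consecutive terms is exactly what rules out ties, so every consecutive comparison is strict and a change of direction always occurs at a single point.
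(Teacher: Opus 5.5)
Your proof is correct. Because consecutive terms are distinct, every comparison is strict, so a non-monotone subsequence must change direction, and the first sign change is a local extremum of the subsequence. The paper states this as a Fact with no proof, treating it as immediate from the definitions, and your argument is exactly that verification.
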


We will also use the following stepping-up properties, which are easy consequences of Properties I and II, see \cite{C-F-S-1,F-H-L-L,H-L-L-W-1,M-S-3}.
\medskip

\textbf{Property III.} For $\delta(v_1,v_r) = \max_{1 \leq j \leq r-1} \delta(v_j,v_{j+1})$, there is a unique $\delta_i$ which achieves the maximum.
\medskip

\textbf{Property IV.} For every 4-tuple $v_1 < \cdots < v_4$, if $\delta(v_1,v_2) > \delta(v_2,v_3)$, then $\delta(v_1,v_2) \neq \delta(v_3,v_4)$. Note that if $\delta(v_1,v_2) < \delta(v_2,v_3)$, it is possible that $\delta(v_1,v_2) = \delta(v_3,v_4)$.
\medskip

\textbf{Property V.} For $v_1 < \cdots < v_r$, set $\delta_j = \delta(v_j,v_{j+1})$ for $j \in [r-1]$ and suppose that $\delta_1,\ldots,\delta_{r-1}$ form a monotone sequence. Then for every subset of $k$ vertices $v_{i_1},v_{i_2},\ldots,v_{i_k}$ where $v_{i_1} < \cdots < v_{i_k}$, $\delta(v_{i_1},v_{i_2}),\delta(v_{i_2},v_{i_3}),\ldots,\delta(v_{i_{k-1}},v_{i_k})$ form a monotone sequence. Moreover for every subset of $k-1$ such $\delta_j$'s, i.e., $\delta_{j_1},\delta_{j_2},\ldots,\delta_{j_{k-1}}$,
there are $k$ vertices $v_{i_1},\ldots,v_{i_k}$ such that $\delta(v_{i_t},v_{i_{t+1}}) = \delta_{j_t}$.

\section{The upper bound for $f^{(4)}_{5,s}(N)$ with $s\ge 11$}\label{u-b-f-11}
The upper bound for $f^{(4)}_{5,s}(N)$ follows by applying a variant of the Erd\H{o}s-Hajnal stepping-up lemma. This process begins with a graph possessing a specific property, the existence of which is guaranteed by a direct application of the probabilistic method.

\begin{lemma}\label{phi-11}
For every $n\ge 5$, there exists an absolute constant $c>0$ such that the following holds. There is a red/blue/green coloring $\phi$ of the pairs of $\{0,1,\ldots,\lfloor 2^{cn}\rfloor-1\}$ with the property that every $n$-set $A\subset \{0,1,\ldots,\lfloor 2^{cn}\rfloor-1\}$ contains a $4$-tuple $a_i<a_j<a_k<a_\ell$ satisfying $$\phi(a_i,a_j)=\phi(a_j,a_k)=\phi(a_k,a_\ell)=\text{red},~\phi(a_i,a_k)=\phi(a_j,a_\ell)=\text{blue},~\text{and}~\phi(a_i,a_\ell)=\text{green}.$$
\end{lemma}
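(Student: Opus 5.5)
We prove the lemma by a uniform random coloring and a union bound, with a disjoint-quadruple argument to get a tail estimate for each fixed $n$-set. Let $M=\lfloor 2^{cn}\rfloor$. Color each pair of $\{0,1,\ldots,M-1\}$ red, blue or green independently and uniformly at random, with probability $1/3$ each. For a fixed $4$-tuple $a_i<a_j<a_k<a_\ell$, the event that its six pairs receive exactly the prescribed pattern (three consecutive pairs red, the two "distance two" pairs blue, the outer pair green) has probability $p=3^{-6}$.

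Fix an $n$-set $A$. We need many $4$-tuples inside $A$ whose events are independent. So choose a family $\mathcal{F}$ of $4$-subsets of $A$ such that any two members share at most one vertex. Then no pair of vertices lies in two members of $\mathcal{F}$, so the events for distinct members of $\mathcal{F}$ depend on disjoint sets of pairs and are mutually independent.

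Such a family of size $\Omega(n^2)$ exists, for instance:
\begin{itemize}
\item greedily, since each chosen $4$-set forbids only $O(n^2)$ others among $\binom{n}{4}$, giving $|\mathcal{F}|\ge \binom{n}{4}/O(n^2)=\Omega(n^2)$;
\item or by Rödl nibble / Steiner-system type packings, which give about $\binom{n}{2}/6$.
\end{itemize}
For the bound we only need $|\mathcal{F}|\ge c_1 n^2$ for some absolute $c_1>0$ and all $n\ge 5$. Small $n$ are handled by adjusting constants, since at least one $4$-set always exists.

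The probability that $A$ contains no $4$-tuple with the required pattern is then at most
$$(1-3^{-6})^{c_1n^2}\le e^{-c_2 n^2}.$$

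The union bound over all $n$-sets gives
$$\binom{M}{n}e^{-c_2n^2}\le M^n e^{-c_2n^2}\le 2^{cn^2}e^{-c_2n^2}<1$$
once $c<c_2/\ln 2$, say $c=c_2/2$. Hence some coloring $\phi$ has the property for every $n$-set. If $M\le n$ there are no $n$-sets and the statement is vacuous, which handles the floor for small $n$.

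The only non-routine point is obtaining a quadratic-size family of pairwise edge-disjoint $4$-sets. A linear-size family of vertex-disjoint $4$-sets would only give failure probability $e^{-\Theta(n)}$, which cannot beat $\binom{M}{n}\approx 2^{cn^2}$. The greedy bound above settles this, so I expect no real obstacle. One can alternatively use Janson's inequality over all $\binom{n}{4}$ tuples, with $\mu=\Theta(n^4)$ and $\Delta=\Theta(n^6)$, giving $\exp(-\Theta(\mu^2/\Delta))=\exp(-\Theta(n^2))$, the same bound.
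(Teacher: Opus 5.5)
Your proposal is correct and follows essentially the same route as the paper. Both arguments use a uniform random $3$-coloring and a family of $\Omega(n^2)$ four-subsets of $A$ that pairwise share at most one vertex; the paper cites a partial Steiner $(n,4,2)$-system, while you build the family greedily. This makes the events independent, so $A$ fails with probability at most $(1-3^{-6})^{\Omega(n^2)}$, and a union bound over the at most $2^{cn^2}$ choices of $A$ finishes. Your optional Janson remark would need one extra step, because the pattern events are not monotone (a pair must be red in some $4$-tuples and blue or green in others); for example, you could restrict to $4$-tuples with one vertex in each of four consecutive intervals of $A$, where every pair has a fixed required color. Your main argument does not depend on this.
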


\noindent\textit{Proof of Lemma~\ref{phi-11}.~}Set $D=\lfloor 2^{cn}\rfloor$, where $c>0$ is a sufficiently small absolute constant to be determined later.
Consider a random red/blue/green coloring $\phi$ of the pairs of $\{0,1,\ldots,D-1\}$ in which each pair is colored independently and uniformly from the three colors.

We call a 4-tuple $a_i,a_j,a_k,a_{\ell} \in \{0,1,\ldots,D-1\}$ with $a_i < a_j < a_k < a_{\ell}$ \emph{good} if 
$$\phi(a_i,a_j)=\phi(a_j,a_k)=\phi(a_k,a_\ell)=\text{red}, ~~\phi(a_i,a_k)=\phi(a_j,a_\ell)=\text{blue},~~\text{and}~~\phi(a_i,a_\ell)=\text{green},$$ and \emph{bad} otherwise. 
For a fixed $4$-tuple, exactly one pattern of its six pair-colors is good, and hence the probability that it is bad is $1-\frac{1}{3^6}=\frac{728}{729}$.

Now fix an $n$-subset $A\subset \{0,1,\ldots,D-1\}$. We estimate the probability that $A$ contains no good $4$-tuple. Note that there exists a partial Steiner $(n,4,2)$-system $S$\footnote{That is, a $4$-uniform hypergraph on an $n$-vertex set with at least $c'n^2$ edges, for some constant $c'>0$, in which every pair of vertices is contained in at most one edge.}\cite{E-H-2}, on $A$ with at least $c'n^2$ edges. Since any two distinct $4$-tuples in $S$ share at most one vertex, they share no pair, and therefore the corresponding bad events are independent. It follows that the probability that all $4$-tuples in $S$ are bad is at most $\left(\frac{728}{729}\right)^{c'n^2}$. Hence, the probability that $A$ contains no good $4$-tuple is also at most $\left(\frac{728}{729}\right)^{c'n^2}$.

Therefore, for sufficiently small absolute constant $c>0$, the expected number of $n$-subsets $A$ containing no good $4$-tuple is at most $\binom{D}{n}\left(\frac{728}{729}\right)^{c'n^2}<\frac{1}{2}$. Thus, by Markov's inequality, we conclude that there is a 3-coloring $\phi$ with the desired property. \hfill$\Box$
\medskip

Let $c>0$ be the constant from Lemma \ref{phi-11}, and let $U=\{0,1,\ldots,\lfloor2^{cn}\rfloor-1\}$ and $\phi: {{U}\choose{2}}\rightarrow \{\text{red}, \text{blue},\text{green}\}$ be a $3$-coloring of the pairs of $U$ satisfying the properties given in the lemma. Now, let $N=2^{\lfloor2^{cn}\rfloor}$ and $V(H)=\{0,1,\ldots,N-1\}$. For each $s\ge 11$, we shall use the coloring $\phi$ to produce a $K^{(4)}_s$-free $4$-graph $H$ on $V(H)$ with $\alpha_{5}(H)<{2^{11}n^{11}+1}$ as follows. For any $4$-tuple $e=\langle v_1,v_2,v_3,v_4 \rangle$ of $V(H)$, set $e\in E(H)$ if and only if one of the following holds:
\begin{enumerate}
\item[\textbf{($\Xi$)}] $\delta_1,\delta_2,\delta_3$ forms a monotone sequence and one of the following holds:
\begin{enumerate}
\item[\textbf{($\Xi_1$)}]$\phi(\delta_1,\delta_2)=\phi(\delta_2,\delta_3)=\text{red}$ and $\phi(\delta_1,\delta_3)=\text{blue}$.
\item[\textbf{($\Xi_2$)}]$\phi(\delta_1,\delta_2)=\text{red}$, $\phi(\delta_2,\delta_3)=\text{blue}$ and $\phi(\delta_1,\delta_3)=\text{green}$.
\item[\textbf{($\Xi_3$)}]$\phi(\delta_1,\delta_2)=\text{blue}$, $\phi(\delta_2,\delta_3)=\text{red}$ and $\phi(\delta_1,\delta_3)=\text{green}$.
\end{enumerate}

\item[\textbf{($\Lambda$)}] $\delta_1<\delta_2>\delta_3$.

\item[\textbf{($\Gamma$)}] $\delta_1>\delta_2<\delta_3$ and one of the following holds:
\begin{enumerate}
\item[\textbf{($\Gamma_1$)}] $\delta_1< \delta_3$, $\phi(\delta_1,\delta_2)=\phi(\delta_1,\delta_3)=\phi(\delta_2,\delta_3)=\text{red}$.
\item[\textbf{($\Gamma_2$)}]$\delta_1< \delta_3$, $\phi(\delta_1,\delta_2), \phi(\delta_1,\delta_3), \phi(\delta_2,\delta_3)\neq \text{red}$.
\item[\textbf{($\Gamma_3$)}]$\delta_1> \delta_3$, $\phi(\delta_1,\delta_2)= \phi(\delta_1,\delta_3)=\text{red}, \phi(\delta_2,\delta_3)\neq \text{red}$.
\item[\textbf{($\Gamma_4$)}]$\delta_1> \delta_3$, $\phi(\delta_1,\delta_3)=\text{blue}, \phi(\delta_2,\delta_3)= \text{red}$.
\item[\textbf{($\Gamma_5$)}]$\delta_1> \delta_3$, $\phi(\delta_1,\delta_2)\neq\text{blue}, \phi(\delta_1,\delta_3)=\text{green}, \phi(\delta_2,\delta_3)= \text{red}$.
\end{enumerate}
\end{enumerate}

\subsection{$H$ is $K_s^{(4)}$-free for every $s\ge 11$}

In this subsection, we show that $H$ is $K_s^{(4)}$-free for every $s \ge 11$. 
To see this, consider any set $P = \langle v_1, \ldots, v_s \rangle$ that induces a $K_s^{(4)}$ in $H$. We will prove that $s \le 10$. Recall that $\delta(P)=\{\delta_i\}_{i=1}^{s-1}$.

\begin{claim}\label{no-mono5}
There is no monotone subsequence $\{\delta'_{\ell}\}_{\ell=1}^{5}\subset \{\delta_i\}_{i=1}^{s-1}$ such that $\delta(u_1,\ldots,u_6)=(\delta'_{1},\ldots,\delta'_{5})$ for some $\{u_1,\ldots,u_6\}\subset\{v_1,\ldots,v_{s}\}$.
\end{claim}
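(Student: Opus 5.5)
We argue by contradiction: we reduce the clique condition on $\{u_1,\ldots,u_6\}$ to a condition on how $\phi$ colors the pairs among the five values $\delta'_1,\ldots,\delta'_5$, and then show that no such coloring exists.

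\medskip
\noindent\textbf{Reduction.} Suppose that $\{u_1<\cdots<u_6\}\subset P$ has $\delta(u_1,\ldots,u_6)=(\delta'_1,\ldots,\delta'_5)$ monotone. By Property V, for every choice of indices $1\le j_1<j_2<j_3\le 5$ there are four vertices among $u_1,\ldots,u_6$ whose $\delta$-sequence is exactly $(\delta'_{j_1},\delta'_{j_2},\delta'_{j_3})$, and this sequence is monotone. Since $P$ induces a clique, these four vertices form an edge of $H$. A monotone triple is not of type $(\Lambda)$ or $(\Gamma)$, so it must satisfy one of $(\Xi_1),(\Xi_2),(\Xi_3)$.

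Color the pair $\{a,b\}\subset[5]$ by $\phi(\delta'_a,\delta'_b)$. This is well defined because the $\delta'_\ell$ are distinct by monotonicity, and $\phi$ is a coloring of unordered pairs. The conclusion of the reduction is: for every $a<b<c$ in $[5]$,
$$\bigl(\phi(ab),\phi(bc),\phi(ac)\bigr)\in\{(\text{red},\text{red},\text{blue}),\ (\text{red},\text{blue},\text{green}),\ (\text{blue},\text{red},\text{green})\}.$$
This holds whether the sequence is increasing or decreasing, since the roles of $\delta_1,\delta_2,\delta_3$ in $(\Xi)$ are positional.

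\medskip
\noindent\textbf{Two consequences of the allowed patterns.}
\begin{enumerate}
\item[(a)] A pair in the "outer" position $ac$ of some triple is never red.
\item[(b)] A pair in an "inner" position $ab$ or $bc$ of some triple is never green.
\end{enumerate}

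\medskip
\noindent\textbf{Contradiction.}
\begin{enumerate}
\item[(i)] The pair $\{1,3\}$ is outer in the triple $(1,2,3)$, so by (a) it is blue or green. It is inner in the triple $(1,3,4)$, so by (b) it is red or blue. Hence $\phi(13)=\text{blue}$.
\item[(ii)] The pair $\{1,4\}$ is outer in $(1,2,4)$, so by (a) it is blue or green. It is inner in $(1,4,5)$, so by (b) it is red or blue. Hence $\phi(14)=\text{blue}$.
\item[(iii)] Now look at the triple $(1,3,4)$, where $\phi(ab)=\phi(13)=\text{blue}$. The only allowed pattern with first coordinate blue is $(\text{blue},\text{red},\text{green})$. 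This forces $\phi(14)=\text{green}$, contradicting (ii).
\end{enumerate}
Hence no such six vertices exist.

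\medskip
\noindent\textbf{Where the difficulty lies.} The key point is the appeal to Property V: it guarantees that every triple of the $\delta'_\ell$ is realized as the $\delta$-sequence of a 4-subset of $\{u_1,\ldots,u_6\}$, which is what lets us impose the $(\Xi)$ constraints on all triples in $[5]$ at once. The argument uses all five indices, since the pair $\{1,4\}$ must be inner in some triple, which needs the index $5$. So five monotone values is exactly what is required here.
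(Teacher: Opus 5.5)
Your proof is correct and is essentially the paper's argument. In both, each triple of the $\delta'_\ell$ is realized as a monotone $4$-tuple among $u_1,\ldots,u_6$, which must then be a $(\Xi)$-edge, and this leads to an impossible color pattern. The paper just chooses the triples more economically. The triples $(1,2,3)$ and $(3,4,5)$ give $\phi(\delta'_1,\delta'_3),\phi(\delta'_3,\delta'_5)\neq\text{red}$, so in the triple $(1,3,5)$ both inner pairs are non-red, which none of $(\Xi_1)$--$(\Xi_3)$ allows.
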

\noindent\textit{Proof of Claim \ref{no-mono5}.~}Suppose, to the contrary, that $\{\delta'_{\ell}\}_{\ell=1}^5$
is a monotone increasing subsequence, without loss of generality, and that there exists $\{u_1,\ldots,u_6\}\subset\{v_i\}_{i=1}^{s} $ such that $\delta(u_1,\ldots,u_6)=(\delta'_1<\delta'_2<\delta'_3<\delta'_4<\delta'_5)$. Since $\delta(u_1,u_2,u_3,u_4)=(\delta'_1<\delta'_2<\delta'_3)$ and \textbf{($\Xi$)}, we have $\phi(\delta'_1,\delta'_3)\neq\text{red}$. Similarly, since $\delta(u_3,u_4,u_5,u_6)=(\delta'_3<\delta'_4<\delta'_5)$, we have $\phi(\delta'_3,\delta'_5)\neq\text{red}$. Therefore, $(u_1,u_2,u_4,u_6)\notin E(H[P])$ since $\delta(u_1,u_2,u_4,u_6)=(\delta'_1<\delta'_3<\delta'_5)$, and $\phi(\delta'_1,\delta'_3),\phi(\delta'_3,\delta'_5)\neq\text{red}$ and \textbf{($\Xi$)}, a contradiction.\hfill$\Box$

Let $\delta_k$ denote the unique largest element in $\delta(P)$. The uniqueness of $\delta_{k}$ follows from Property III. We first establish the following two claims.

\begin{claim}\label{no-left3}
There is no monotone decreasing subsequence $\{\delta'_{\ell}\}_{\ell=1}^{3}$ on the left of $\delta_k$ such that $\delta(u_1,u_2,u_3,u_4)=(\delta'_{1},\delta'_{2},\delta'_{3})$ for some $\{u_1,u_2,u_3,u_4\}\subset\{v_1,\ldots,v_{k}\}$.
\end{claim}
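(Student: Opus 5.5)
The plan is to use the vertex $v_{k+1}$ just to the right of the maximum $\delta_k$ as an extra vertex, and to combine the edge conditions \textbf{($\Gamma$)} and \textbf{($\Xi$)} until the colors contradict each other. Suppose, for contradiction, that $\{u_1<u_2<u_3<u_4\}\subset\{v_1,\ldots,v_k\}$ has $\delta(u_1,u_2,u_3,u_4)=(\delta'_1>\delta'_2>\delta'_3)$.

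First I fix the relevant $\delta$-values. Since $u_4\le v_k<v_{k+1}$, Property II gives $\delta(u_4,v_{k+1})=\max_{j}\delta(v_j,v_{j+1})$ over a range that contains $\delta_k$. As $\delta_k$ is the global maximum, $\delta(u_4,v_{k+1})=\delta_k$. Each $\delta'_\ell$ is a maximum of $\delta_j$'s with $j<k$, so $\delta'_\ell<\delta_k$ by the uniqueness in Property III. Property II also gives $\delta(u_1,u_3)=\delta'_1$ and $\delta(u_2,u_4)=\delta'_2$, because the sequence is decreasing. Every $4$-subset of $P$ is an edge of $H$, and I will use the following four of them.

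\emph{The quadruple $(u_1,u_2,u_3,u_4)$.} It is monotone decreasing, so it must satisfy \textbf{($\Xi$)}. In all three subcases $\phi(\delta'_1,\delta'_3)\ne\text{red}$. Moreover, either $\phi(\delta'_1,\delta'_2)=\text{red}$ (subcases $\Xi_1,\Xi_2$), or $\phi(\delta'_2,\delta'_3)=\text{red}$ (subcase $\Xi_3$).

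\emph{The quadruple $(u_1,u_3,u_4,v_{k+1})$.} It has $\delta$-sequence $(\delta'_1>\delta'_3<\delta_k)$ with $\delta'_1<\delta_k$. So it is of type \textbf{($\Gamma$)} with first entry smaller than last, hence it falls in $\Gamma_1$ or $\Gamma_2$. Thus the three colors $\phi(\delta'_1,\delta'_3)$, $\phi(\delta'_1,\delta_k)$ and $\phi(\delta'_3,\delta_k)$ are either all red or all non-red. Since $\phi(\delta'_1,\delta'_3)\ne$ red, all three are non-red; in particular $\phi(\delta'_3,\delta_k)\ne$ red.

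\emph{The quadruple $(u_2,u_3,u_4,v_{k+1})$.} It has $\delta$-sequence $(\delta'_2>\delta'_3<\delta_k)$, again in $\Gamma_1$ or $\Gamma_2$. Because $\phi(\delta'_3,\delta_k)$ is non-red, the colors $\phi(\delta'_2,\delta'_3)$ and $\phi(\delta'_2,\delta_k)$ are non-red too.

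\emph{The quadruple $(u_1,u_2,u_4,v_{k+1})$.} It has $\delta$-sequence $(\delta'_1>\delta'_2<\delta_k)$. Now $\phi(\delta'_2,\delta_k)\ne$ red, so by $\Gamma_1$/$\Gamma_2$ we get $\phi(\delta'_1,\delta'_2)\ne$ red as well.

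Comparing with the first quadruple, both $\phi(\delta'_1,\delta'_2)$ and $\phi(\delta'_2,\delta'_3)$ are non-red. This excludes $\Xi_1$ and $\Xi_2$, which need $\phi(\delta'_1,\delta'_2)=\text{red}$, and it excludes $\Xi_3$, which needs $\phi(\delta'_2,\delta'_3)=\text{red}$. This is a contradiction.

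The only delicate point is reading \textbf{($\Xi$)} correctly for decreasing sequences, i.e.\ with $\phi$ evaluated on unordered pairs. The rest is bookkeeping: choosing the right quadruples containing $v_{k+1}$ so that the "all red or all non-red" rule of $\Gamma_1$/$\Gamma_2$ propagates non-redness from $(\delta'_1,\delta'_3)$ to every pair.
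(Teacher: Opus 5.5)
Your proof is correct and follows essentially the paper's argument. Both use $v_{k+1}$ and the ``all red or all non-red'' dichotomy of ($\Gamma_1$)/($\Gamma_2$) for quadruples with $\delta$-sequence $(\cdot>\cdot<\delta_k)$ to force $\phi(\delta'_1,\delta'_2)$ and $\phi(\delta'_2,\delta'_3)$ to be non-red, which contradicts ($\Xi$) on $(u_1,u_2,u_3,u_4)$. The only difference is bookkeeping: the paper first rules out $\phi(\delta'_1,\delta_k)=\text{red}$ via $(u_1,u_3,u_4,v_{k+1})$ and then uses $(u_1,u_2,u_3,v_{k+1})$, whereas you seed with $\phi(\delta'_3,\delta_k)\neq\text{red}$ and use $(u_1,u_2,u_4,v_{k+1})$, which has the same $\delta$-sequence $(\delta'_1>\delta'_2<\delta_k)$.
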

\noindent\textit{Proof of Claim \ref{no-left3}.~}Suppose, to the contrary, that $\delta(u_1,u_2,u_3,u_4)=(\delta'_{1}>\delta'_{2}>\delta'_{3})$ for some $\{u_1,u_2,u_3,u_4\}\subset\{v_1,\ldots,v_{k}\}$. If $\phi(\delta'_{1},\delta_k)=\text{red}$, then $(u_1,u_3,u_4,v_{k+1})\in E(H[P])$ implies $\phi(\delta'_{1},\delta'_{3})=\text{red}$ by \textbf{($\Gamma_1$)}. But $(u_1,u_2,u_3,u_4)\in E(H[P])$ implies $\phi(\delta'_{1},\delta'_{3})\neq\text{red}$ by \textbf{($\Xi$)}, a contradiction. Thus, $\phi(\delta'_{1},\delta_k)\neq\text{red}$. Note that $(u_1,u_2,u_3,v_{k+1})\in E(H[P])$ implies $\phi(\delta'_{1},\delta'_{2})\neq\text{red}$ and $\phi(\delta'_{2},\delta_{k})\neq\text{red}$   by \textbf{($\Gamma_2$)}, which together with $(u_2,u_3,u_4,v_{k+1})\in E(H[P])$ implies $\phi(\delta'_{2},\delta'_{3})\neq\text{red}$ again by \textbf{($\Gamma_2$)}. For $\phi(\delta'_{1},\delta'_{2}), \phi(\delta'_{2},\delta'_{3})\neq\text{red}$ and \textbf{($\Xi$)}, we have $(u_1,u_2,u_3,u_4)\notin E(H[P])$, again a contradiction.
\hfill$\Box$

\begin{claim}\label{no-right3}
There is no monotone increasing subsequence $\{\delta'_{\ell}\}_{\ell=1}^{3}$ on the right of $\delta_k$ such that $\delta(u_1,u_2,u_3,u_4)=(\delta'_{1},\delta'_{2},\delta'_{3})$ for some $\{u_1,u_2,u_3,u_4\}\subset\{v_{k+1},\ldots,v_{s}\}$.
\end{claim}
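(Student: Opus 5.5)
The plan is to mirror the proof of Claim~\ref{no-left3}, this time using the vertex $v_k$, which lies immediately to the left of $\delta_k$, as the extra vertex. Suppose $\delta(u_1,u_2,u_3,u_4)=(\delta'_1<\delta'_2<\delta'_3)$ with all $u_i\in\{v_{k+1},\ldots,v_s\}$. By Property~II, every consecutive $\delta$ between $v_k$ and any $u_a$ is at most $\delta_k$, and $\delta_k$ itself occurs. Hence $\delta(v_k,u_a)=\delta_k$, and all $\delta'_i<\delta_k$ by the uniqueness in Property~III. Consequently, for $a<b<c$ with $\delta(u_a,u_b)<\delta(u_b,u_c)$, the $4$-tuple $(v_k,u_a,u_b,u_c)$ has $\delta$-sequence $(\delta_k>x<y)$ with $\delta_k>y$, where $x=\delta(u_a,u_b)$ and $y=\delta(u_b,u_c)$. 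Since this $4$-tuple lies in $P$, it is an edge, so one of \textbf{($\Gamma_3$)}, \textbf{($\Gamma_4$)}, \textbf{($\Gamma_5$)} must hold.

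Write $a_i=\phi(\delta_k,\delta'_i)$ and $\phi_{ij}=\phi(\delta'_i,\delta'_j)$. I will use three $4$-tuples. The tuple $(v_k,u_1,u_2,u_3)$ involves the pair $(\delta'_1,\delta'_2)$. The tuple $(v_k,u_2,u_3,u_4)$ involves $(\delta'_2,\delta'_3)$. The tuple $(v_k,u_1,u_2,u_4)$ involves $(\delta'_1,\delta'_3)$, because $\delta(u_2,u_4)=\delta'_3$.

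From the $\Gamma$-rules I extract the following dichotomy for a tuple with data $(\delta_k,x,y)$. If $\phi(x,y)\ne$ red, only \textbf{($\Gamma_3$)} is possible, which forces $\phi(\delta_k,x)=\phi(\delta_k,y)=$ red. If $\phi(x,y)=$ red, then \textbf{($\Gamma_3$)} is excluded, so \textbf{($\Gamma_4$)} or \textbf{($\Gamma_5$)} holds, and in particular $\phi(\delta_k,y)\in\{\text{blue},\text{green}\}$, so it is not red.

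Next I combine this dichotomy with \textbf{($\Xi$)} applied to the edge $(u_1,u_2,u_3,u_4)$, which gives three cases.
\begin{itemize}
\item Case \textbf{($\Xi_1$)}: $\phi_{12}=\phi_{23}=$ red and $\phi_{13}=$ blue. The pair $(\delta'_2,\delta'_3)$ has red colour, so $a_3\ne$ red. The pair $(\delta'_1,\delta'_3)$ has a non-red colour, so $a_3=$ red. These contradict each other.
\item Case \textbf{($\Xi_2$)}: $\phi_{12}=$ red and $\phi_{23}=$ blue. The pair $(\delta'_1,\delta'_2)$ gives $a_2\ne$ red, while $(\delta'_2,\delta'_3)$ gives $a_2=$ red, a contradiction.
\item Case \textbf{($\Xi_3$)}: $\phi_{23}=$ red and $\phi_{13}=$ green. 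This gives $a_3\ne$ red from $(\delta'_2,\delta'_3)$, but $a_3=$ red from $(\delta'_1,\delta'_3)$, a contradiction.
\end{itemize}
In every case we reach a contradiction, which proves the claim.

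The only delicate step is the first one: checking via Property~II that $\delta(v_k,u_a)=\delta_k$, and hence that each chosen $4$-tuple has the claimed shape $\delta_k>x<y$ with $\delta_k>y$, so that only \textbf{($\Gamma_3$)}--\textbf{($\Gamma_5$)} can apply. Once that is settled, the argument is the short colour bookkeeping above. The key point to keep straight is that the right-hand-side analogue uses the ``third'' colour $\phi(\delta_k,y)$, not $\phi(\delta_k,x)$.
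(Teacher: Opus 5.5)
Your proof is correct and follows essentially the paper's argument. It uses the same three auxiliary edges $(v_k,u_1,u_2,u_3)$, $(v_k,u_2,u_3,u_4)$ and $(v_k,u_1,u_2,u_4)$, analyzed via \textbf{($\Gamma_3$)}--\textbf{($\Gamma_5$)} against \textbf{($\Xi$)} on $(u_1,u_2,u_3,u_4)$; the only difference is bookkeeping, since the paper branches on whether $\phi(\delta_k,\delta'_3)$ is red and uses only the consequences of \textbf{($\Xi$)} that $\phi(\delta'_1,\delta'_3)$ is not red and that one of $\phi(\delta'_1,\delta'_2),\phi(\delta'_2,\delta'_3)$ is red, whereas you split into the cases \textbf{($\Xi_1$)}--\textbf{($\Xi_3$)} using your equivalence ``$\phi(\delta_k,y)$ is red iff $\phi(x,y)$ is not red''.
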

\noindent\textit{Proof of Claim \ref{no-right3}.~}Suppose, to the contrary, that $\delta(u_1,u_2,u_3,u_4)=(\delta'_{1}<\delta'_{2}<\delta'_{3})$ for some $\{u_1,u_2,u_3,u_4\}\subset\{v_{k+1},\ldots,v_{s}\}$. If $\phi(\delta_k,\delta'_{3})\neq\text{red}$, then $(v_k,u_1,u_2,u_4)\in E(H[P])$ implies $\phi(\delta'_{1},\delta'_{3})=\text{red}$ by \textbf{($\Gamma_4$)} or \textbf{($\Gamma_5$)}. But $(u_1,u_2,u_3,u_4)\in E(H[P])$ implies $\phi(\delta'_{1},\delta'_{3})\neq\text{red}$ by \textbf{($\Xi$)}, a contradiction. Thus, $\phi(\delta_k,\delta'_{3})=\text{red}$. Note that $(v_k,u_2,u_3,u_4)\in E(H[P])$ implies $\phi(\delta'_{2},\delta'_{3})\neq\text{red}$ and $\phi(\delta_{k},\delta'_{2})=\text{red}$ by \textbf{($\Gamma_3$)}, which together with $(v_k,u_1,u_2,u_3)\in E(H[P])$ implies that $\phi(\delta'_{1},\delta'_{2})\neq\text{red}$ by noting \textbf{($\Gamma_3$)} again.
For $\phi(\delta'_{1},\delta'_{2}), \phi(\delta'_{2},\delta'_{3})\neq\text{red}$ and \textbf{($\Xi$)}, we have $(u_1,u_2,u_3,u_4)\notin E(H[P])$, again a contradiction.\hfill$\Box$

Recall that $\delta_k$ is the unique largest element in $\delta(P)$. We next show that $k\le 5$ and $s-k\le 5$, which together yield $s\le 10$, as desired. We only prove $k\le 5$ here. The bound $s-k\le 5$ follows similarly, using Claim \ref{no-left3} in place of Claim \ref{no-right3} and applying the edge existence condition.

Suppose, to the contrary, that $k\ge 6$; that is, there are at least 5 elements on the left of $\delta_k$ in $\delta(P)$. Let $\delta_j$ denote the unique largest element in $\{\delta_1,\ldots,\delta_{k-1}\}$.  The uniqueness of $\delta_{j}$ follows from Property III. Then, $\delta_j<\delta_k$. We now turn to the following cases.

\textbf{Case 1.~} 
$\phi(\delta_j,\delta_k)=\text{red}$. Then we claim that $k-j\le2$. Otherwise, $k-j\ge3$. Thus, we have $\delta(v_j,v_{j+1},v_{j+2},v_{j+3},v_{k+1})=(\delta_j>\delta_{j+1},\delta_{j+2}<\delta_k)$. Moreover, Claim \ref{no-left3} and Property I imply that $\delta_{j+1}<\delta_{j+2}$. 
Since $(v_j,v_{j+1},v_{j+2},v_{k+1})\in E(H[P])$, $\phi(\delta_j,\delta_k)=\text{red}$, and $\delta_j<\delta_k$, it follows from \textbf{($\Gamma_1$)} that $\phi(\delta_{j+1},\delta_k)=\text{red}$. However, $(v_{j+1},v_{j+2},v_{j+3},v_{k+1})\in E(H[P])$ implies that $\phi(\delta_{j+1},\delta_{k})\neq\text{red}$ by \textbf{($\Xi$)}, a contradiction.
Thus, $j\ge k-2\ge 4$. 

Let $\delta_{\ell}$ denote the unique largest element in $\{\delta_1,\ldots,\delta_{j-1}\}$. Then we have $\delta_{\ell}<\delta_j<\delta_k$. Since
$(v_{\ell},v_{\ell+1},v_{j+1},v_{k+1})\in E(H[P])$ and $\phi(\delta_j,\delta_k)=\text{red}$, either $\phi(\delta_{\ell},\delta_j)=\text{red}$ and $\phi(\delta_{\ell},\delta_k)=\text{blue}$ or $\phi(\delta_{\ell},\delta_j)=\text{blue}$ and $\phi(\delta_{\ell},\delta_k)=\text{green}$ by \textbf{($\Xi$)}. We proceed to consider the two possibilities separately and each results in a contradiction.

If $\phi(\delta_{\ell},\delta_j)=\text{red}$ and $\phi(\delta_{\ell},\delta_k)=\text{blue}$, then $j-\ell=1$. Otherwise, we can find $\delta_{j-1}$ between $\delta_{\ell}$ and $\delta_j$ such that $\delta(v_{\ell},v_{j-1},v_{j},v_{j+1},v_{k+1})=(\delta_{\ell}>\delta_{j-1}<\delta_{j}<\delta_{k})$. Since $(v_{\ell},v_{j-1},v_{j},v_{j+1})\in E(H[P])$ and $\delta_{\ell}<\delta_j$, it follows from \textbf{($\Gamma_1$)} that $\phi(\delta_{\ell},\delta_{j-1})=\text{red}$. But $(v_{\ell},v_{j-1},v_{j},v_{k+1})\in E(H[P])$ and $\delta_{\ell}<\delta_k$ imply $\phi(\delta_{\ell},\delta_{j-1})\neq\text{red}$ by \textbf{($\Gamma_2$)}, a contradiction. For $\ell=j-1$, we have $$\delta(v_{j-3},v_{j-2},v_{j-1},v_{j},v_{j+1},v_{k+1})=(\delta_{j-3},\delta_{j-2}<\delta_{j-1}<\delta_j<\delta_k),$$ and hence $\delta_{j-3}>\delta_{j-2}$ by Claim \ref{no-mono5}. Since $(v_{j-3},v_{j-2},v_{j},v_{k+1})\in E(H[P])$ and $\phi(\delta_{j-1},\delta_k)=\text{blue}$ from assumption, 
$\phi(\delta_{j-3},\delta_{j-1})=\text{red}$ and $\phi(\delta_{j-3},\delta_{k})=\text{green}$ by \textbf{($\Xi_2$)}. It follows from $(v_{j-3},v_{j-2},v_{j-1},v_{j})\in E(H[P])$, $\delta_{j-3}<\delta_{j-1}$, $\phi(\delta_{j-1},\delta_{j})=\text{red}$ and \textbf{($\Gamma_1$)} that  $\phi(\delta_{j-3},\delta_{j-2})=\text{red}$. However, $(v_{j-3},v_{j-2},v_{{j-1}},v_{k+1})\in E(H[P])$ implies $\phi(\delta_{j-3},\delta_{j-2})\neq\text{red}$ by \textbf{($\Gamma_2$)}, a contradiction.

If $\phi(\delta_{\ell},\delta_j)=\text{blue}$ and $\phi(\delta_{\ell},\delta_k)=\text{green}$, then $\ell=1$. Otherwise, $\delta(v_1,v_{2},v_{{\ell}+1},v_{k+1})=(\delta_1<\delta_{\ell}<\delta_k)$ and $\phi(\delta_{\ell},\delta_k)=\text{green}$ imply $(v_1,v_{2},v_{{\ell}+1},v_{k+1})\notin E(H[P])$, a contradiction. For $\ell=1$ and $j\ge 4$, we have $$\delta(v_{1},v_2,v_{3},v_{4},v_{j+1},v_{k+1})=(\delta_{1}>\delta_{2},\delta_{3}<\delta_j<\delta_{k}),$$ and hence $\delta_{2}<\delta_{3}$ by Claim \ref{no-left3}. Since $(v_{2},v_{3},v_{j+1},v_{k+1})\in E(H[P])$, it follows from \textbf{($\Xi$)} that $\phi(\delta_{2},\delta_j)\neq\text{green}$. Since $(v_{1},v_{3},v_{4},v_{j+1})\in E(H[P])$ and $\phi(\delta_1,\delta_j)=\text{blue}$, $\phi(\delta_{3},\delta_j)\neq\text{red}$ from \textbf{($\Gamma_2$)}. Combining these two facts about $\phi(\delta_2, \delta_j)$ and $\phi(\delta_{3}, \delta_j)$, we see from \textbf{($\Xi$)} that $(v_{2},v_{3},v_{4},v_{j+1})\notin E(H[P])$, a contradiction.
\medskip

\textbf{Case 2.~} 
$\phi(\delta_j,\delta_k)\neq\text{red}$. Then we claim $k-j\le 3$. Thus, $j\ge 3$. Otherwise, we have $k-j\ge 4$. Note that $$\delta(v_{j},v_{j+1},v_{j+2},v_{j+3},v_{j+4},v_{k+1})=(\delta_{j}>\delta_{j+1},\delta_{j+2},\delta_{j+3}<\delta_{k}),$$ and hence $\delta_{j+1}<\delta_{j+2}<\delta_{j+3}$ by Claim \ref{no-left3}. However, $\{\delta_{j},\delta_{j+1},\delta_{j+2},\delta_{j+3}\}$ then yields a contradiction by an argument analogous to the proof of Claim~\ref{no-right3}.
 
Note that $j\ge 3$ and we can consider $\delta(v_1,v_{2},v_{3},v_{j+1},v_{k+1})=(\delta_{1},\delta_{2}<\delta_j<\delta_{k})$. Since $(v_1,v_{2},v_{j+1},v_{k+1})\in E(H[P])$, it follows from \textbf{($\Xi_2$)} that $\phi(\delta_{1},\delta_j)=\text{red}$ and $\phi(\delta_{1},\delta_k)=\text{green}$
. If $\delta_1<\delta_{2}$, then applying \textbf{($\Xi$)} to the fact that 
$\phi(\delta_{1},\delta_{j})=\text{red}$ yields
$(v_1,v_{2},v_{3},v_{j+1})\notin E(H[P])$, a contradiction.
Hence, by Property I, we must have $\delta_1>\delta_{2}$. Given this, together with the conditions $(v_1,v_{2},v_{3},v_{j+1})\in E(H[P])$ 
and $\delta_j>\delta_1$, \textbf{($\Gamma_1$)} yields 
$\phi(\delta_{1},\delta_{2})=\text{red}$. 
This, combined with $\phi(\delta_{1},\delta_k)=\text{green}$, 
forces $(v_1,v_{2},v_{3},v_{k+1})\notin E(H[P])$, leading to another contradiction. This completes the proof of $k\le 5$.

\subsection{$\alpha_5(H)<2^{11}n^{11}+1$}
In this subsection, we show that $\alpha_5(H)<2^{11}n^{11}+1$. Suppose to the contrary that there are vertices $Q=\langle v_1,v_2,\cdots ,v_m\rangle$,  $m=2^{11}n^{11}+1$, which induce a $K^{(4)}_5$-independent set in $H$. Recall that $\delta_i=\delta(v_i,v_{i+1})$, and hence $\delta(Q)=\{\delta_1,\ldots, \delta_{m-1}\}$. 

\begin{lemma}\label{bad-11}
There is no monotone subsequence $\{\delta_{i_\ell}\}_{\ell=1}^n\subset \{\delta_j\}_{j=1}^{m-1}$ such that for any $a,b,c,d \in [n]$ with $a<b<c<d$, there exists $\{u_1,\cdots,u_5\}\subset\{v_1,\ldots,v_m\}$ such that $\delta(u_1,\ldots,u_5)=(\delta_{i_a},\delta_{i_b},\delta_{i_c},\delta_{i_d})$.
\end{lemma}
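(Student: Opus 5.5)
Suppose, for contradiction, that such a monotone subsequence $\{\delta_{i_\ell}\}_{\ell=1}^n$ exists; by symmetry we take it increasing, $\delta_{i_1}<\cdots<\delta_{i_n}$. The $\delta_{i_\ell}$ are distinct elements of $U=\{0,1,\ldots,\lfloor 2^{cn}\rfloor-1\}$, since $\delta$ takes values below $D=\lfloor 2^{cn}\rfloor$. So $A=\{\delta_{i_1},\ldots,\delta_{i_n}\}$ is an $n$-subset of $U$, and Lemma~\ref{phi-11} gives indices $a<b<c<d$ with
\[\phi(\delta_{i_a},\delta_{i_b})=\phi(\delta_{i_b},\delta_{i_c})=\phi(\delta_{i_c},\delta_{i_d})=\text{red},\quad \phi(\delta_{i_a},\delta_{i_c})=\phi(\delta_{i_b},\delta_{i_d})=\text{blue},\quad \phi(\delta_{i_a},\delta_{i_d})=\text{green}.\]
By hypothesis there are $u_1<\cdots<u_5$ in $Q$ with $\delta(u_1,\ldots,u_5)=(\delta_{i_a}<\delta_{i_b}<\delta_{i_c}<\delta_{i_d})$. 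We will show that all five $4$-subsets of $\{u_1,\ldots,u_5\}$ are edges of $H$. Then this $5$-set spans a $K^{(4)}_5$ inside $Q$, contradicting that $Q$ is $K^{(4)}_5$-independent.

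To verify the five edges, we use Property II: deleting an interior vertex $u_t$ replaces the two consecutive values $\delta(u_{t-1},u_t),\delta(u_t,u_{t+1})$ by their maximum. Since the sequence is increasing, that maximum is the later value. Hence every $4$-subset has a monotone increasing $\delta$-triple, so only condition \textbf{($\Xi$)} is relevant, and the five triples are as follows.
\begin{itemize}
\item Deleting $u_5$ gives $(\delta_{i_a},\delta_{i_b},\delta_{i_c})$, with colors red, red and blue on the outer pair. This satisfies \textbf{($\Xi_1$)}.
\item Deleting $u_1$ gives $(\delta_{i_b},\delta_{i_c},\delta_{i_d})$, again red, red, blue. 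This satisfies \textbf{($\Xi_1$)}.
\item Deleting $u_2$ gives $(\delta_{i_b},\delta_{i_c},\delta_{i_d})$ once more. This satisfies \textbf{($\Xi_1$)}.
\item Deleting $u_3$ gives $(\delta_{i_a},\delta_{i_c},\delta_{i_d})$. Here $\phi(\delta_{i_a},\delta_{i_c})=$ blue, $\phi(\delta_{i_c},\delta_{i_d})=$ red and $\phi(\delta_{i_a},\delta_{i_d})=$ green, which is \textbf{($\Xi_3$)}.
\item Deleting $u_4$ gives $(\delta_{i_a},\delta_{i_b},\delta_{i_d})$. Here $\phi(\delta_{i_a},\delta_{i_b})=$ red, $\phi(\delta_{i_b},\delta_{i_d})=$ blue and $\phi(\delta_{i_a},\delta_{i_d})=$ green, which is \textbf{($\Xi_2$)}.
\end{itemize}
This pattern is exactly what the choice of \textbf{($\Xi_1$)}--\textbf{($\Xi_3$)} is designed to capture.

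The decreasing case is where care is needed. Writing $\delta_{i_a}>\delta_{i_b}>\delta_{i_c}>\delta_{i_d}$, deleting an interior vertex now keeps the earlier value. The conditions in \textbf{($\Xi$)} are stated positionally: $\phi(\delta_1,\delta_2)$, $\phi(\delta_2,\delta_3)$, $\phi(\delta_1,\delta_3)$. If $\phi$ is read as a symmetric function on unordered pairs, the same computation goes through with the roles of the first and last values swapped. Deleting $u_2$ then gives $(\delta_{i_a},\delta_{i_c},\delta_{i_d})$ and deleting $u_3$ gives $(\delta_{i_a},\delta_{i_b},\delta_{i_d})$, both treated as above. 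Deleting $u_4$ gives $(\delta_{i_a},\delta_{i_b},\delta_{i_c})$ and deleting $u_1$ or $u_5$ gives the other consecutive triples, all of which satisfy \textbf{($\Xi_1$)}.

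We still need to check that Lemma~\ref{phi-11} applies to the $4$-tuple read in increasing numeric order, which is the reverse of the index order here. For the coloring pattern of the lemma this reversal is harmless, because the pattern is invariant under reversing the order: red on consecutive pairs, blue on pairs at distance two, green on the extreme pair. Confirming that the argument is order-independent in this way is the only real subtlety; everything else is a direct check.
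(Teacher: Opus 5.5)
Your proof is correct and follows the same route as the paper. You apply Lemma~\ref{phi-11} to the $n$ values of the monotone subsequence and use \textbf{($\Xi$)} to see that the five vertices $u_1,\ldots,u_5$ span a $K^{(4)}_5$. You also make explicit two points the paper leaves implicit: the verification of all five $4$-subsets (three by \textbf{($\Xi_1$)}, one each by \textbf{($\Xi_2$)} and \textbf{($\Xi_3$)}), and the justification of the ``without loss of generality'' reduction to the increasing case, which rests on the reversal symmetry of both the good colour pattern and condition \textbf{($\Xi$)}.
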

\noindent\textit{Proof of Lemma \ref{bad-11}.~}Suppose to the contrary that $\{\delta_{i_\ell}\}_{\ell=1}^n$ is such a monotone increasing subsequence without loss of generality. It follows from Lemma \ref{phi-11} that there is a $4$-tuple $\delta_{i_a},\delta_{i_b},\delta_{i_c},\delta_{i_d}$ with $\delta_{i_a}<\delta_{i_b}<\delta_{i_c}<\delta_{i_d}$ such that $$\phi(\delta_{i_a},\delta_{i_b})=\phi(\delta_{i_b},\delta_{i_c})=\phi(\delta_{i_c},\delta_{i_d})=\text{red}, ~~\phi(\delta_{i_a},\delta_{i_c})=\phi(\delta_{i_b},\delta_{i_d})=\text{blue}, ~~\phi(\delta_{i_a},\delta_{i_d})=\text{green}.$$ Since there exists $\{u_1,\cdots,u_5\}\subset \{v_1,\ldots,v_m\}$ such that $\delta(u_1,\ldots,u_5)=(\delta_{i_a},\delta_{i_b},\delta_{i_c},\delta_{i_d})$ from the assumption, $\{u_1,\ldots,u_5\}$ forms a copy of  $K^{(4)}_5$ from \textbf{($\Xi$)}, a contradiction.\hfill$\Box$
\medskip

Denote $\beta_i=\frac{m-1}{(2n)^i}$  for $i\in[0,11]$. For $t\in[11]$, 
we will greedily construct \textit{$t$-layer local maxima sequences} $\Delta^{(t)}$ such that $\Delta^{(t)}\subset\Delta^{(t-1)}$, starting with $\Delta^{(0)}=\delta(Q)$, and satisfy the following property. (We do not require that the elements of $\Delta^{(t)}$ be distinct.)

\begin{itemize}
    \item[($\ast$)] For two consecutive elements $\delta_a$, $\delta_b\in\Delta^{(t)}$, we have $\delta_x<\max\{\delta_a ,\delta_b\}$ for all $a<x<b$, and hence $\delta_a\neq\delta_b$.
\end{itemize}

For $t\ge1$, assume now that we have obtained $\Delta^{(t-1)}$ satisfying the desired property. We restrict our attention to $\Delta^{(t-1)}$ and we will find $\Delta^{(t)}$ to be the first $\beta_t$ local maxima (with respect to the sequence $\Delta^{(t-1)}$) as follows.  For convenience, we abbreviate ``with respect to " as ``w.r.t." in the following discussion. We claim first that there is no monotone consecutive subsequence of length $n$. Otherwise, suppose such a subsequence $Q'$ exists. Without loss of generality, assume $Q'$ is increasing. For any $\delta_a,\delta_b,\delta_c,\delta_d\in Q'$ with $a<b<c<d$. Then $\delta(v_{a},v_{a+1},v_{b+1},v_{c+1},v_{d+1})=(\delta_{a}<\delta_{b}<\delta_{c}<\delta_{d})$ by noting the first part of the property of $\Delta^{(t-1)}$ and Property II, which contradicts Lemma \ref{bad-11}. It follows from the second part of the property ($\ast$) of $\Delta^{(t-1)}$ and Fact \ref{fact} that we can set $\Delta^{(t)}$ to be the first $\beta_t$ local maxima (w.r.t. $\Delta^{(t-1)}$). Therefore, $\Delta^{(t)}\subset \Delta^{(t-1)}$. 

To show the  property ($\ast$) for $\Delta^{(t)}$,  we consider two consecutive elements $\delta_a$, $\delta_b\in\Delta^{(t)}$ and  we may assume that $\delta_{a},\delta_{i_1},\delta_{i_2},\cdots,\delta_{i_j},\delta_{b}$ are consecutive elements in $\Delta^{(t-1)}$. Note that $\delta_{a}$ and $\delta_{b}$ are consecutive local maxima (w.r.t. $\Delta^{(t-1)}$), we have $\delta_{i_\ell}<\max\{\delta_a, \delta_b\}$ for $\ell\in[j]$. Furthermore, it follows from the inductive hypothesis that $\delta_x<\max\{\delta_{i_\ell},\delta_{i_{\ell+1}}\}$ for all $i_\ell<x<i_{\ell+1}$ and $\ell\in[j-1]$, then $\delta_x<\max\{\delta_{i_\ell},\delta_{i_{\ell+1}}\}<\max\{\delta_{a},\delta_{b}\}$. Thus, $\delta_x<\max\{\delta_{a},\delta_{b}\}$ for all $a<x<b$. Moreover, Property I implies that $\delta_a\neq\delta_b$, as desired. Otherwise $\delta(v_a,v_b)=\delta_a=\delta_b=\delta(v_b,v_{b+1})$, a contradiction.

For $t\in[11]$ and $\delta_j\in\Delta^{(t)}\backslash\Delta^{(t+1)}$, where $\Delta^{(12)}=\emptyset$. Note that $\delta_j$ is a local maximum (w.r.t. $\Delta^{(t-1)}$), we always let $\delta_{j^-}$ and $\delta_{j^+}$ be the \textbf{closest} element to the left and right of $\delta_j$ in the sequence $\Delta^{(t-1)}$, respectively. In particular, $\delta_{j^-},\delta_{j^+}\in \Delta^{(t-1)}\setminus\Delta^{(t)}$. From the above greedy construction, we can obtain the following observation by repeatedly using ($\ast$).

\begin{observation}\label{observation-1}
For $t\in [11]$ and  $\delta_j\in \Delta^{(t)}\backslash\Delta^{(t+1)}$, 
we have $\delta_x<\delta_{j}$ for each $x\in[j^-,j^+]\backslash \{j\}$.
\end{observation}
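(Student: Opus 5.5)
The plan is to prove Observation~\ref{observation-1} directly from property ($\ast$) and the fact that $\delta_j$ is a local maximum with respect to $\Delta^{(t-1)}$. Fix $t\in[11]$ and $\delta_j\in\Delta^{(t)}\setminus\Delta^{(t+1)}\subseteq\Delta^{(t)}$. By construction, $\Delta^{(t)}$ consists of local maxima w.r.t.\ $\Delta^{(t-1)}$. Hence $\delta_{j^-}$, $\delta_j$, $\delta_{j^+}$ are three consecutive elements of $\Delta^{(t-1)}$ with $\delta_{j^-}<\delta_j>\delta_{j^+}$. These inequalities are strict because, by the second part of ($\ast$) applied to $\Delta^{(t-1)}$, consecutive elements of $\Delta^{(t-1)}$ are distinct.

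\textbf{Step 1 (the endpoints).} The cases $x=j^-$ and $x=j^+$ are exactly the local maximum inequalities $\delta_{j^-}<\delta_j$ and $\delta_{j^+}<\delta_j$.

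\textbf{Step 2 (the interior of each gap).} Take $x$ with $j^-<x<j$. Since $\delta_{j^-}$ and $\delta_j$ are consecutive in $\Delta^{(t-1)}$, property ($\ast$) for $\Delta^{(t-1)}$ gives $\delta_x<\max\{\delta_{j^-},\delta_j\}=\delta_j$. In the same way, for $j<x<j^+$ we get $\delta_x<\max\{\delta_j,\delta_{j^+}\}=\delta_j$. Together with Step 1, this covers every $x\in[j^-,j^+]\setminus\{j\}$.

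\textbf{Step 3 (justifying ($\ast$) at level $t-1$).} The only point needing care is that ($\ast$) holds for $\Delta^{(t-1)}$ for every $t\in[11]$, including $t=1$. For $t=1$ we have $\Delta^{(0)}=\delta(Q)$, whose consecutive elements are $\delta_a,\delta_{a+1}$. Here ($\ast$) holds vacuously, since there is no $x$ strictly between consecutive indices, and distinctness follows from Property I. For $t\ge2$, ($\ast$) was established inductively in the construction above.

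Also, $j^-$ and $j^+$ must exist: $\delta_j$ is a local maximum, so it has neighbours on both sides in $\Delta^{(t-1)}$. If one insists on the case where ``the first $\beta_t$ local maxima'' are taken only among interior elements, this is automatic.

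I expect no real obstacle; the main thing to keep straight is the bookkeeping, namely that the relevant ($\ast$) is the one for $\Delta^{(t-1)}$ and not for $\Delta^{(t)}$. The ``repeatedly using ($\ast$)'' in the text is already built into the inductive proof of ($\ast$) itself.
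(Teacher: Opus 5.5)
Your argument is correct and is essentially the paper's own: the paper only says the observation follows from the greedy construction ``by repeatedly using ($\ast$)''. You make this precise by taking the endpoints $j^-,j^+$ from the strict local-maximum inequalities in $\Delta^{(t-1)}$, and the interior indices from ($\ast$) at level $t-1$, whose inductive proof (with the base case $\Delta^{(0)}$ handled by Property I) is exactly where the repeated use lives.
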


Note that $|\Delta^{(11)}|=\beta_{11}=1$. We will find $K^{(4)}_5$ of \textbf{($\Gamma$, $\Lambda$)}-type  in $H[Q]$ by using the following claims, which contradicts our assumption that $H[Q]$ is $K^{(4)}_5$-free. 

\begin{claim}\label{blue} For any {$t\in [5,11]$}, let $\delta_{a}\in\Delta^{(t)}\backslash\Delta^{(t+1)}$ and $\delta_{b}=\delta_{a^+}$.
For $\delta_c\in\Delta^{(4)}$ and $a<c\le b$, we have $\phi(\delta_a,\delta_c)\neq\text{blue}$.
\end{claim}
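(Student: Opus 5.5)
We argue by contradiction. Assume that $\phi(\delta_a,\delta_c)=\text{blue}$ for some $t\in[5,11]$, $\delta_a\in\Delta^{(t)}\setminus\Delta^{(t+1)}$, $\delta_b=\delta_{a^+}$ and $\delta_c\in\Delta^{(4)}$ with $a<c\le b$. The aim is to find five vertices of $Q$ that span a $K^{(4)}_5$ in $H[Q]$. This contradicts that $Q$ is $5$-independent.

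By Observation~\ref{observation-1}, $\delta_x<\delta_a$ for every $x\in[a^-,a^+]\setminus\{a\}$, so in particular $\delta_c<\delta_a$.

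\textbf{The candidate $5$-set.} Take
$$u_1=v_a,\quad u_2=v_{a+1},\quad u_3=v_{y+1},\quad u_4=v_{c+1},\quad u_5=v_{z+1},$$
with $a<y<c<z$. The indices $y$ and $z$ are taken from the lower layers $\Delta^{(0)},\dots,\Delta^{(3)}$ near $\delta_c$. They should satisfy $\delta_y<\delta_c$, $\delta_z<\delta_c$, and every $\delta_x$ strictly between consecutive chosen indices should be below the relevant maximum. Since $\delta_c\in\Delta^{(4)}$, it is a local maximum of several nested layers, and Observation~\ref{observation-1} together with Property~II lets us choose $y,z$ so that
$$\delta(u_1,\dots,u_5)=(\delta_a>\delta_y<\delta_c>\delta_z).$$
(If $c=a+1$, first move $u_2$ so that a small $\delta_y$ can still be inserted; this is handled the same way.) We then check each of the five $4$-subsets:
\begin{itemize}
\item $\{u_2,u_3,u_4,u_5\}$ has pattern $(\delta_y<\delta_c>\delta_z)$, so it is an edge by \textbf{($\Lambda$)}.
\item $\{u_1,u_2,u_3,u_4\}$ and $\{u_1,u_2,u_3,u_5\}$ both have pattern $(\delta_a>\delta_y<\delta_c)$ with $\delta_a>\delta_c$. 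Since $\phi(\delta_a,\delta_c)=\text{blue}$, \textbf{($\Gamma_4$)} makes them edges as soon as $\phi(\delta_y,\delta_c)=\text{red}$.
\item $\{u_1,u_2,u_4,u_5\}$ and $\{u_1,u_3,u_4,u_5\}$ both have the decreasing pattern $(\delta_a>\delta_c>\delta_z)$. By the reversed form of \textbf{($\Xi_3$)} they are edges if $\phi(\delta_c,\delta_z)=\text{red}$ and $\phi(\delta_a,\delta_z)=\text{green}$.
\end{itemize}
So it suffices to find admissible $y$ and $z$ with $\phi(\delta_y,\delta_c)=\phi(\delta_c,\delta_z)=\text{red}$ and $\phi(\delta_a,\delta_z)=\text{green}$.

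\textbf{Forcing the colours.} We cannot control $\phi$ directly. Instead, as in the proofs of Claims~\ref{no-left3} and \ref{no-right3}, we exploit that every $4$-subset of $Q$ that would be an edge must fail somewhere. Consider the local maxima of $\Delta^{(1)},\Delta^{(2)},\Delta^{(3)}$ lying between $\delta_c$ and its neighbours $\delta_{c^-},\delta_{c^+}$ in $\Delta^{(3)}$. For such a chain $\delta_{y_1},\delta_{y_2},\dots$, and for $(\delta_a,\cdot,\delta_c)$, we write down the non-edge conditions for the $4$-tuples they induce. These are the monotone triples together with the \textbf{($\Gamma$)}/\textbf{($\Lambda$)} configurations through $\delta_a$ and $\delta_c$. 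Each non-edge forbids some colour combination, and the goal is to show these restrictions propagate.

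Concretely, if no $y$ gives $\phi(\delta_y,\delta_c)=\text{red}$, then several $4$-tuples of type \textbf{($\Gamma_2$)}/\textbf{($\Xi$)} become edges, and another $K^{(4)}_5$ appears. The layer count is what makes this possible: we need $t\ge 5$ for $\delta_a$ and $\delta_c\in\Delta^{(4)}$ so that enough nested local maxima are available for at most four rounds of this forcing.

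\textbf{Main obstacle.} The hard step is the colour forcing. We need a short finite case analysis, in the style of Cases~1--2 above, showing that every assignment of colours to the relevant pairs among $\delta_a,\delta_c$ and the chosen lower-layer maxima makes some $5$-set a clique. We would first try the $5$-set above with $\delta_z$ the largest lower-layer element to the right of $c$. If $\phi(\delta_a,\delta_z)\neq\text{green}$, we switch to the symmetric configuration $(\delta_a>\delta_y<\delta_c,\ \delta_z)$ with $\delta_z>\delta_c$ to the right, where \textbf{($\Gamma_1$)}/\textbf{($\Gamma_5$)} apply. We then check that the two alternatives together cover all colourings.
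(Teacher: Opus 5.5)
\textbf{There is a genuine gap: the colour forcing, which is the whole content of the claim, is never carried out.}

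Your check of the candidate set $\{v_a,v_{a+1},v_{y+1},v_{c+1},v_{z+1}\}$ is correct. But you never produce indices $y,z$ with $\phi(\delta_y,\delta_c)=\phi(\delta_c,\delta_z)=\text{red}$ and $\phi(\delta_a,\delta_z)=\text{green}$. The ``Forcing the colours'' and ``Main obstacle'' paragraphs only announce that some finite case analysis should cover all colourings. Nothing in your setup can force $\phi(\delta_a,\delta_z)=\text{green}$; green is precisely the colour that Claim~\ref{green} later excludes at high layers. So as written there is no argument.

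Your target is also stronger than needed. Since $\delta_a\in\Delta^{(1)}$, we have $\delta_{a-1}<\delta_a$. Let $\delta_{c'}$ be the left neighbour of $\delta_c$ in $\Delta^{(3)}$; note $c'>a$. For any $y$ with $c'\le y<c$, the set $\{v_{a-1},v_a,v_y,v_{y+1},v_{c+1}\}$ has pattern $(\delta_{a-1}<\delta_a>\delta_y<\delta_c)$ with $\delta_a>\delta_c$. If $\phi(\delta_y,\delta_c)=\text{red}$, the two $4$-subsets omitting $v_{a-1}$ or $v_a$ are edges by \textbf{($\Gamma_4$)}, and the other three are edges by \textbf{($\Lambda$)}, giving a $K^{(4)}_5$. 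So the red case closes at once without any $z$. The whole difficulty is the complementary case $\phi(\delta_y,\delta_c)\neq\text{red}$, which you dismiss in one unsupported sentence.

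The missing idea is an explicit chain along which non-redness towards $\delta_c$ is converted into forced reds. The paper's chain is:
\begin{itemize}
\item Take $\delta_d=\delta_{c^-}\in\Delta^{(3)}$, $\delta_e=\delta_{d^+}\in\Delta^{(2)}$, $\delta_f=\delta_{e^-}\in\Delta^{(1)}$, $\delta_g=\delta_{f^+}\in\Delta^{(0)}$. By Observation~\ref{observation-1}, $a<d<f<g<e<c$. The red case above gives $\phi(\delta_x,\delta_c)\neq\text{red}$ for $x\in\{d,e,f,g\}$.
\item The set $\{v_d,v_e,v_{e+1},v_{c+1},v_{c+2}\}$ has pattern $(\delta_d>\delta_e<\delta_c>\delta_{c+1})$ with $\delta_d<\delta_c$. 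It is a clique by \textbf{($\Gamma_2$)} and \textbf{($\Lambda$)} unless $\phi(\delta_d,\delta_e)=\text{red}$. Likewise $\phi(\delta_d,\delta_f)=\phi(\delta_d,\delta_g)=\phi(\delta_f,\delta_g)=\text{red}$.
\item The set $\{v_{d-1},v_d,v_f,v_{f+1},v_{e+1}\}$ has pattern $(\delta_{d-1}<\delta_d>\delta_f<\delta_e)$ with $\delta_d>\delta_e$. It is a clique by \textbf{($\Gamma_3$)} and \textbf{($\Lambda$)} unless $\phi(\delta_f,\delta_e)=\text{red}$. Similarly $\phi(\delta_g,\delta_e)=\text{red}$.
\item Finally, $\{v_f,v_g,v_{g+1},v_{e+1},v_{e+2}\}$ is a clique by \textbf{($\Gamma_1$)} and \textbf{($\Lambda$)}.
\end{itemize}
This is exactly where the four layers below $\Delta^{(4)}$ are used. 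Without this, or an equivalent concrete forcing chain, your proposal does not establish the claim.
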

\begin{figure}[htbp] 
\begin{center}

\tikzset{every picture/.style={line width=0.75pt}} 

\begin{tikzpicture}[x=0.75pt,y=0.75pt,yscale=-1,xscale=1]

\draw    (200,3109.5) -- (200,2958.5) ;
\draw [shift={(200,2956.5)}, rotate = 90] [color={rgb, 255:red, 0; green, 0; blue, 0 }  ][line width=0.75]    (10.93,-3.29) .. controls (6.95,-1.4) and (3.31,-0.3) .. (0,0) .. controls (3.31,0.3) and (6.95,1.4) .. (10.93,3.29)   ;
\draw  [fill={rgb, 255:red, 0; green, 0; blue, 0 }  ,fill opacity=1 ] (444.5,3030.52) .. controls (444.5,3028.85) and (445.85,3027.5) .. (447.52,3027.5) .. controls (449.18,3027.5) and (450.53,3028.85) .. (450.53,3030.52) .. controls (450.53,3032.18) and (449.18,3033.53) .. (447.52,3033.53) .. controls (445.85,3033.53) and (444.5,3032.18) .. (444.5,3030.52) -- cycle ;
\draw  [fill={rgb, 255:red, 0; green, 0; blue, 0 }  ,fill opacity=1 ] (379.5,3050.52) .. controls (379.5,3048.85) and (380.85,3047.5) .. (382.52,3047.5) .. controls (384.18,3047.5) and (385.53,3048.85) .. (385.53,3050.52) .. controls (385.53,3052.18) and (384.18,3053.53) .. (382.52,3053.53) .. controls (380.85,3053.53) and (379.5,3052.18) .. (379.5,3050.52) -- cycle ;
\draw  [fill={rgb, 255:red, 0; green, 0; blue, 0 }  ,fill opacity=1 ] (265.5,3048.52) .. controls (265.5,3046.85) and (266.85,3045.5) .. (268.52,3045.5) .. controls (270.18,3045.5) and (271.53,3046.85) .. (271.53,3048.52) .. controls (271.53,3050.18) and (270.18,3051.53) .. (268.52,3051.53) .. controls (266.85,3051.53) and (265.5,3050.18) .. (265.5,3048.52) -- cycle ;
\draw  [fill={rgb, 255:red, 0; green, 0; blue, 0 }  ,fill opacity=1 ] (284.5,3029.52) .. controls (284.5,3027.85) and (285.85,3026.5) .. (287.52,3026.5) .. controls (289.18,3026.5) and (290.53,3027.85) .. (290.53,3029.52) .. controls (290.53,3031.18) and (289.18,3032.53) .. (287.52,3032.53) .. controls (285.85,3032.53) and (284.5,3031.18) .. (284.5,3029.52) -- cycle ;
\draw  [fill={rgb, 255:red, 0; green, 0; blue, 0 }  ,fill opacity=1 ] (424.5,3013.52) .. controls (424.5,3011.85) and (425.85,3010.5) .. (427.52,3010.5) .. controls (429.18,3010.5) and (430.53,3011.85) .. (430.53,3013.52) .. controls (430.53,3015.18) and (429.18,3016.53) .. (427.52,3016.53) .. controls (425.85,3016.53) and (424.5,3015.18) .. (424.5,3013.52) -- cycle ;
\draw  [fill={rgb, 255:red, 0; green, 0; blue, 0 }  ,fill opacity=1 ] (458.5,2996.52) .. controls (458.5,2994.85) and (459.85,2993.5) .. (461.52,2993.5) .. controls (463.18,2993.5) and (464.53,2994.85) .. (464.53,2996.52) .. controls (464.53,2998.18) and (463.18,2999.53) .. (461.52,2999.53) .. controls (459.85,2999.53) and (458.5,2998.18) .. (458.5,2996.52) -- cycle ;
\draw  [fill={rgb, 255:red, 0; green, 0; blue, 0 }  ,fill opacity=1 ] (223.5,2990.52) .. controls (223.5,2988.85) and (224.85,2987.5) .. (226.52,2987.5) .. controls (228.18,2987.5) and (229.53,2988.85) .. (229.53,2990.52) .. controls (229.53,2992.18) and (228.18,2993.53) .. (226.52,2993.53) .. controls (224.85,2993.53) and (223.5,2992.18) .. (223.5,2990.52) -- cycle ;
\draw  [fill={rgb, 255:red, 0; green, 0; blue, 0 }  ,fill opacity=1 ] (245.5,2970.52) .. controls (245.5,2968.85) and (246.85,2967.5) .. (248.52,2967.5) .. controls (250.18,2967.5) and (251.53,2968.85) .. (251.53,2970.52) .. controls (251.53,2972.18) and (250.18,2973.53) .. (248.52,2973.53) .. controls (246.85,2973.53) and (245.5,2972.18) .. (245.5,2970.52) -- cycle ;
\draw  [fill={rgb, 255:red, 0; green, 0; blue, 0 }  ,fill opacity=1 ] (398.5,3073.52) .. controls (398.5,3071.85) and (399.85,3070.5) .. (401.52,3070.5) .. controls (403.18,3070.5) and (404.53,3071.85) .. (404.53,3073.52) .. controls (404.53,3075.18) and (403.18,3076.53) .. (401.52,3076.53) .. controls (399.85,3076.53) and (398.5,3075.18) .. (398.5,3073.52) -- cycle ;
\draw  [fill={rgb, 255:red, 0; green, 0; blue, 0 }  ,fill opacity=1 ] (316.5,3078.52) .. controls (316.5,3076.85) and (317.85,3075.5) .. (319.52,3075.5) .. controls (321.18,3075.5) and (322.53,3076.85) .. (322.53,3078.52) .. controls (322.53,3080.18) and (321.18,3081.53) .. (319.52,3081.53) .. controls (317.85,3081.53) and (316.5,3080.18) .. (316.5,3078.52) -- cycle ;
\draw  [fill={rgb, 255:red, 0; green, 0; blue, 0 }  ,fill opacity=1 ] (347.5,3094.52) .. controls (347.5,3092.85) and (348.85,3091.5) .. (350.52,3091.5) .. controls (352.18,3091.5) and (353.53,3092.85) .. (353.53,3094.52) .. controls (353.53,3096.18) and (352.18,3097.53) .. (350.52,3097.53) .. controls (348.85,3097.53) and (347.5,3096.18) .. (347.5,3094.52) -- cycle ;
\draw  [dash pattern={on 4.5pt off 4.5pt}]  (226.52,2990.52) -- (248.52,2970.52) ; 
\draw  [dash pattern={on 4.5pt off 4.5pt}]  (350.52,3094.52) -- (319.52,3078.52) ;
\draw  [dash pattern={on 4.5pt off 4.5pt}]  (401.52,3073.52) -- (382.52,3050.52) ;
\draw  [dash pattern={on 4.5pt off 4.5pt}]  (319.52,3078.52) -- (382.52,3050.52) ;
\draw  [dash pattern={on 4.5pt off 4.5pt}]  (382.52,3050.52) -- (287.52,3029.52) ;
\draw  [dash pattern={on 4.5pt off 4.5pt}]  (268.52,3048.52) -- (287.52,3029.52) ; 
\draw  [dash pattern={on 4.5pt off 4.5pt}]  (447.52,3030.52) -- (427.52,3013.52) ;
\draw  [dash pattern={on 4.5pt off 4.5pt}]  (287.52,3029.52) -- (427.52,3013.52) ;
\draw  [dash pattern={on 4.5pt off 4.5pt}]  (248.52,2970.52) -- (461.52,2996.52) ;
\draw    (200,3109.5) -- (517,3111.5) ;

\draw (239.65,2949.5) node [anchor=north west][inner sep=0.75pt]  [font=\small]  {$\delta _{a}$};

\draw (208.65,2987.5) node [anchor=north west][inner sep=0.75pt]  [font=\small]  {$\delta _{a-1}$};

\draw (453.65,2976.5) node [anchor=north west][inner sep=0.75pt]  [font=\small]  {$\delta _{b}$};

\draw (419.65,2993.5) node [anchor=north west][inner sep=0.75pt]  [font=\small]  {$\delta _{c}$};

\draw (278.65,3008.5) node [anchor=north west][inner sep=0.75pt]  [font=\small]  {$\delta _{d}$};

\draw (250.65,3045.5) node [anchor=north west][inner sep=0.75pt]  [font=\small]  {$\delta _{d-1}$};

\draw (453.65,3025.5) node [anchor=north west][inner sep=0.75pt]  [font=\small]  {$\delta _{c+1}$};

\draw (388.65,3043.5) node [anchor=north west][inner sep=0.75pt]  [font=\small]  {$\delta _{e}$};

\draw (297.65,3070.5) node [anchor=north west][inner sep=0.75pt]  [font=\small]  {$\delta _{f}$};

\draw (356.65,3088.5) node [anchor=north west][inner sep=0.75pt]  [font=\small]  {$\delta _{g}$};

\draw (406.65,3068.5) node [anchor=north west][inner sep=0.75pt]  [font=\small]  {$\delta _{e+1}$};

\end{tikzpicture}

\end{center}
\caption{A diagram illustrating the proof process for Claims \ref{blue} and \ref{green}. }
\label{figure:greedy}
\end{figure}
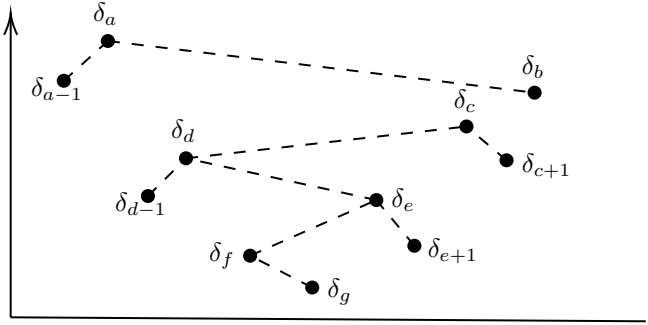
\noindent\textit{Proof of Claim \ref{blue}.~}Suppose, to the contrary, that there exists $\delta_c\in\Delta^{(4)}$ and $a<c\le b$ such that $\phi(\delta_a,\delta_c)=\text{blue}$. 
Consider $\delta_d=\delta_{c^-}\in\Delta^{(3)}$, $\delta_e=\delta_{d^+}\in\Delta^{(2)}$, $\delta_f=\delta_{e^-}\in\Delta^{(1)}$, and $\delta_g=\delta_{f^+}\in\Delta^{(0)}$. It follows from Observation \ref{observation-1} that $$a-1<a<d-1<d<f<g<e<e+1<c\le b.$$ In particular, if $c<b$, then $c<c+1<b$. (See Figure \ref{figure:greedy}).

Note that $$\delta(v_{a-1},v_{a},v_{d},v_{d+1},v_{c+1})=(\delta_{a-1}<\delta_a>\delta_d<\delta_c)~\text{and}~\delta_a>\delta_c$$ from Property II and Observation \ref{observation-1}. 
If $\phi(\delta_d,\delta_c)=\text{red}$, then it follows from $\phi(\delta_a,\delta_c)=\text{blue}$ and \textbf{($\Gamma_4$)} that 
$(v_{a-1},v_{d},v_{d+1},v_{c+1}),(v_{a},v_{d},v_{d+1},v_{c+1})\in E(H[Q])$. Furthermore, note that \textbf{($\Lambda$)}, we have that  $\{v_{a-1},v_{a},v_{d},v_{d+1},v_{c+1}\}$ forms a $K_5^{(4)}$, a contradiction. Hence, $\phi(\delta_d,\delta_c)\neq\text{red}$. Similarly, we have $\phi(\delta_e,\delta_c),\phi(\delta_f,\delta_c),\phi(\delta_g,\delta_c)\neq\text{red}$. 

It holds that $$\delta(v_{d},v_{e},v_{e+1},v_{c+1},v_{c+2})=(\delta_{d}>\delta_e<\delta_c>\delta_{c+1})~\text{and}~\delta_c>\delta_d$$ from Property II and Observation \ref{observation-1}, we claim  $\phi(\delta_d,\delta_e)=\text{red}$. Otherwise,  $(v_{d},v_{e},v_{e+1},v_{c+1})$, $(v_{d},v_{e},v_{e+1},v_{c+2})\in E(H[Q])$ follows from $\phi(\delta_d,\delta_c),\phi(\delta_e,\delta_c)\neq\text{red}$ and \textbf{($\Gamma_2$)}. Together with \textbf{($\Lambda$)}, we  obtain  $\{v_{d},v_{e},v_{e+1},v_{c+1},v_{c+2}\}$ that forms a $K_5^{(4)}$,  a contradiction.  Similarly, we have $\phi(\delta_d,\delta_f)=\phi(\delta_d,\delta_g)=\phi(\delta_f,\delta_g)=\text{red}$. 

If $\phi(\delta_f,\delta_e)\neq\text{red}$, then it follows from \textbf{($\Gamma_3$)} and \textbf{($\Lambda$)} that $\{v_{d-1},v_{d},v_{f},v_{f+1},v_{e+1}\}$ forms a $K_5^{(4)}$  by noting  $\phi(\delta_d,\delta_f)=\phi(\delta_d,\delta_e)=\text{red}$, 
$\delta(v_{d-1},v_{d},v_{f},v_{f+1},v_{e+1})=(\delta_{d-1}<\delta_d>\delta_f<\delta_{e})$ and $\delta_d>\delta_e$,  a contradiction. Hence, $\phi(\delta_f,\delta_e)=\text{red}$. Similarly, $\phi(\delta_g,\delta_e)=\text{red}$. 
Therefore, $\{v_{f},v_{g},v_{g+1},v_{e+1},v_{e+2}\}$ forms a $K_5^{(4)}$ by applying $\phi(\delta_f,\delta_g)=\phi(\delta_f,\delta_e)=\phi(\delta_g,\delta_e)=\text{red}$, \textbf{($\Gamma_1$)} and \textbf{($\Lambda$)}, which is a contradiction.  Consequently, $\phi(\delta_a,\delta_c)\neq\text{blue}$, as desired.\hfill$\Box$

\begin{claim}\label{green}
For any $t\in [9,11]$, let $\delta_{a}\in\Delta^{(t)}\backslash\Delta^{(t+1)}$ and $\delta_{b}=\delta_{a^+}$. For $\delta_c\in\Delta^{(8)}$ and $a<c\le b$, we have $\phi(\delta_a,\delta_c)\neq\text{green}$.
\end{claim}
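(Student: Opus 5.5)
Suppose, to the contrary, that some $\delta_c\in\Delta^{(8)}$ with $a<c\le b$ has $\phi(\delta_a,\delta_c)=\text{green}$. I will follow the template of Claim \ref{blue}, now using Claim \ref{blue} itself as a tool. The first step is to descend from $\delta_c$ through the layers, alternating closest left and right neighbours. Set $\delta_d=\delta_{c^-}\in\Delta^{(7)}$, $\delta_e=\delta_{d^+}\in\Delta^{(6)}$, $\delta_f=\delta_{e^-}\in\Delta^{(5)}$, $\delta_g=\delta_{f^+}\in\Delta^{(4)}$, and continue as needed into $\Delta^{(3)},\dots,\Delta^{(0)}$. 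By Observation \ref{observation-1} these indices are nested as
\[
a<d<f<\cdots<g<e<c\le b,
\]
and every element between two consecutive chosen ones is smaller than the appropriate one. Property II then lets me realise any prescribed pattern such as $(\delta_{a-1}<\delta_a>\delta_x<\delta_c)$ by a concrete $5$-set of vertices $v_{\cdot}$.

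The second step uses the green edge to force colors. The pattern $\delta(v_{a-1},v_a,v_x,v_{x+1},v_{c+1})=(\delta_{a-1}<\delta_a>\delta_x<\delta_c)$ has $\delta_a>\delta_c$ and $\phi(\delta_a,\delta_c)=\text{green}$. By \textbf{($\Lambda$)} and \textbf{($\Gamma_5$)}, if $\phi(\delta_x,\delta_c)=\text{red}$ and $\phi(\delta_a,\delta_x)\neq\text{blue}$, then all five $4$-subsets are edges, giving a $K_5^{(4)}$. Hence for each intermediate $\delta_x$ (that is, $x=d,e,f,g,\dots$) either $\phi(\delta_x,\delta_c)\neq\text{red}$ or $\phi(\delta_a,\delta_x)=\text{blue}$.

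This is where Claim \ref{blue} enters. The elements $\delta_a$ with $t\ge9$ and $\delta_x\in\Delta^{(4)}$ with $a<x\le b$ satisfy its hypotheses, so $\phi(\delta_a,\delta_x)\neq\text{blue}$. This rules out the second alternative, and therefore $\phi(\delta_x,\delta_c)\neq\text{red}$ for $x\in\{d,e,f,g\}$ and any other chosen element in $\Delta^{(4)}$. It is exactly why $\delta_c$ must lie in $\Delta^{(8)}$: four layers are needed below $\delta_c$ while staying inside $\Delta^{(4)}$.

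The third step repeats the remainder of the proof of Claim \ref{blue} verbatim. Once $\phi(\delta_x,\delta_c)\neq\text{red}$ for $x=d,e,f,g$, the argument there, via \textbf{($\Gamma_2$)}, \textbf{($\Gamma_3$)}, \textbf{($\Gamma_1$)} and \textbf{($\Lambda$)} with the configurations $\{v_d,v_e,v_{e+1},v_{c+1},v_{c+2}\}$, $\{v_{d-1},v_d,v_f,v_{f+1},v_{e+1}\}$ and $\{v_f,v_g,v_{g+1},v_{e+1},v_{e+2}\}$, forces a red triangle among $\delta_f,\delta_g,\delta_e$. That triangle yields a $K_5^{(4)}$ in $H[Q]$, a contradiction.

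The main obstacle is the boundary case $c=b$, where $v_{c+2}$ may fall outside the window; there one instead uses $\delta_{c+1}<\delta_c$, or the fact that $\delta_b$ is followed by smaller elements in $\Delta^{(t-1)}$. A second point to check is that $\delta_{a-1}<\delta_a$ and the strict inequalities $\delta_x<\delta_c<\delta_a$ hold for every configuration used. Both follow from Observation \ref{observation-1}, provided the index $t\ge9$ leaves enough room for layers $\Delta^{(4)}$ through $\Delta^{(8)}$ to sit strictly below $\delta_a$.
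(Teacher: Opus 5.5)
Your proposal is correct and follows the paper's proof: the same chain $\delta_d=\delta_{c^-}\in\Delta^{(7)}$, $\delta_e=\delta_{d^+}$, $\delta_f=\delta_{e^-}$, $\delta_g=\delta_{f^+}\in\Delta^{(4)}$; Claim \ref{blue} to rule out $\phi(\delta_a,\delta_x)=\text{blue}$, so that \textbf{($\Gamma_5$)} forces $\phi(\delta_x,\delta_c)\neq\text{red}$; and then the tail of the proof of Claim \ref{blue}. Your worry about the case $c=b$ is unnecessary: $\delta_c$ is a local maximum in its layer, so Observation \ref{observation-1} always gives $\delta_{c+1}<\delta_c$, and this also guarantees that $v_{c+2}$ exists.
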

\noindent\textit{Proof of Claim \ref{green}.~} Suppose, to the contrary, that there exists $\delta_c\in\Delta^{(8)}$ and $a<c\le b$ such that $\phi(\delta_a,\delta_c)=\text{green}$. Consider $\delta_d=\delta_{c^-}\in\Delta^{(7)}$, $\delta_e=\delta_{d^+}\in\Delta^{(6)}$, $\delta_f=\delta_{e^-}\in\Delta^{(5)}$ and $\delta_g=\delta_{f^+}\in\Delta^{(4)}$. Then, $$a-1<a<d<f<g<e<c\le b$$ from Observation \ref{observation-1}. (See Figure \ref{figure:greedy}). Recall that $\Delta^{(i)}\subset \Delta^{(4)}$ for each $i\in[5,11]$. By Claim \ref{blue}, we know that $\phi(\delta_a,\delta_d),\phi(\delta_a,\delta_e),\phi(\delta_a,\delta_f),\phi(\delta_a,\delta_g)\neq\text{blue}$. 

Note that $$\delta(v_{a-1},v_{a},v_{d},v_{d+1},v_{c+1})=(\delta_{a-1}<\delta_a>\delta_d<\delta_c)~\text{and}~\delta_a>\delta_c.$$ If $\phi(\delta_d,\delta_c)=\text{red}$, then it follows from \textbf{($\Gamma_5$)} that $(v_{a-1},v_{d},v_{d+1},v_{c+1}),(v_{a},v_{d},v_{d+1},v_{c+1})\in E(H[Q])$. Together with \textbf{($\Lambda$)}, we know that $\{v_{a-1},v_{a},v_{d},v_{d+1},v_{c+1}\}$ forms a $K_5^{(4)}$, a contradiction. Hence, $\phi(\delta_d,\delta_c)\neq\text{red}$. Similarly, $\phi(\delta_e,\delta_c),\phi(\delta_f,\delta_c),\phi(\delta_g,\delta_c)\neq\text{red}$. The argument then proceeds exactly as in the proof of Claim~\ref{blue}, leading to a contradiction.\hfill$\Box$
\medskip

Recall that $|\Delta^{(11)}|=\beta_{11}=1$, then we can choose $\delta_{a}\in\Delta^{(11)}$.
Let $\delta_{b}=\delta_{a^+}\in\Delta^{(10)}$, $\delta_c=\delta_{b^-}\in\Delta^{(9)}$ and $\delta_d=\delta_{c^+}\in\Delta^{(8)}$. Thus, $$a-1<a<c<d<b<b+1.$$
 It follows from Claim \ref{blue} and Claim \ref{green} that $\phi(\delta_a,\delta_b)=\phi(\delta_a,\delta_c)=\phi(\delta_a,\delta_d)=\phi(\delta_c,\delta_d)=\text{red}$. (See Figure \ref{red-red}).

 \begin{figure}[H] 
\centering 
\tikzset{every picture/.style={line width=0.8pt}} 

\begin{tikzpicture}[x=0.75pt,y=0.75pt,yscale=-1,xscale=1]
\draw    (200,3570.5) -- (200,3419.5) ;
\draw [shift={(200,3417.5)}, rotate = 90] [color={rgb, 255:red, 0; green, 0; blue, 0 }  ][line width=0.75]    (10.93,-3.29) .. controls (6.95,-1.4) and (3.31,-0.3) .. (0,0) .. controls (3.31,0.3) and (6.95,1.4) .. (10.93,3.29)   ;

\draw  [fill={rgb, 255:red, 0; green, 0; blue, 0 }  ,fill opacity=1 ] (484.98,3497.5) .. controls (484.98,3495.83) and (486.33,3494.48) .. (488,3494.48) .. controls (489.67,3494.48) and (491.02,3495.83) .. (491.02,3497.5) .. controls (491.02,3499.17) and (489.67,3500.52) .. (488,3500.52) .. controls (486.33,3500.52) and (484.98,3499.17) .. (484.98,3497.5) -- cycle ;

\draw  [fill={rgb, 255:red, 0; green, 0; blue, 0 }  ,fill opacity=1 ] (459.5,3475.52) .. controls (459.5,3473.85) and (460.85,3472.5) .. (462.52,3472.5) .. controls (464.18,3472.5) and (465.53,3473.85) .. (465.53,3475.52) .. controls (465.53,3477.18) and (464.18,3478.53) .. (462.52,3478.53) .. controls (460.85,3478.53) and (459.5,3477.18) .. (459.5,3475.52) -- cycle ;

\draw  [fill={rgb, 255:red, 0; green, 0; blue, 0 }  ,fill opacity=1 ] (224.5,3469.52) .. controls (224.5,3467.85) and (225.85,3466.5) .. (227.52,3466.5) .. controls (229.18,3466.5) and (230.53,3467.85) .. (230.53,3469.52) .. controls (230.53,3471.18) and (229.18,3472.53) .. (227.52,3472.53) .. controls (225.85,3472.53) and (224.5,3471.18) .. (224.5,3469.52) -- cycle ;

\draw  [dash pattern={on 4.5pt off 4.5pt}]  (227.52,3469.52) -- (249.52,3449.52) ;

\draw    (200,3570.5) -- (517,3572.5) ;

\draw [color={rgb, 255:red, 208; green, 2; blue, 27 }  ,draw opacity=1 ] [dash pattern={on 4.5pt off 4.5pt}]  (249.52,3449.52) -- (462.52,3475.52) ;

\draw  [dash pattern={on 4.5pt off 4.5pt}]  (462.52,3475.52) -- (488,3497.5) ;

\draw  [dash pattern={on 4.5pt off 4.5pt}]  (462.52,3475.52) -- (300.52,3530.52) ;

\draw [color={rgb, 255:red, 208; green, 2; blue, 27 }  ,draw opacity=1 ] [dash pattern={on 4.5pt off 4.5pt}]  (249.52,3449.52) -- (300.52,3530.52) ;

\draw [color={rgb, 255:red, 208; green, 2; blue, 27 }  ,draw opacity=1 ] [dash pattern={on 4.5pt off 4.5pt}]  (249.52,3449.52) -- (373.52,3554.52) ;

\draw [color={rgb, 255:red, 208; green, 2; blue, 27 }  ,draw opacity=1 ] [dash pattern={on 4.5pt off 4.5pt}]  (300.52,3530.52) -- (373.52,3554.52) ;

\draw  [fill={rgb, 255:red, 0; green, 0; blue, 0 }  ,fill opacity=1 ] (246.5,3449.52) .. controls (246.5,3447.85) and (247.85,3446.5) .. (249.52,3446.5) .. controls (251.18,3446.5) and (252.53,3447.85) .. (252.53,3449.52) .. controls (252.53,3451.18) and (251.18,3452.53) .. (249.52,3452.53) .. controls (247.85,3452.53) and (246.5,3451.18) .. (246.5,3449.52) -- cycle ;
 
\draw  [fill={rgb, 255:red, 0; green, 0; blue, 0 }  ,fill opacity=1 ] (370.5,3554.52) .. controls (370.5,3552.85) and (371.85,3551.5) .. (373.52,3551.5) .. controls (375.18,3551.5) and (376.53,3552.85) .. (376.53,3554.52) .. controls (376.53,3556.18) and (375.18,3557.53) .. (373.52,3557.53) .. controls (371.85,3557.53) and (370.5,3556.18) .. (370.5,3554.52) -- cycle ;

\draw  [fill={rgb, 255:red, 0; green, 0; blue, 0 }  ,fill opacity=1 ] (297.5,3530.52) .. controls (297.5,3528.85) and (298.85,3527.5) .. (300.52,3527.5) .. controls (302.18,3527.5) and (303.53,3528.85) .. (303.53,3530.52) .. controls (303.53,3532.18) and (302.18,3533.53) .. (300.52,3533.53) .. controls (298.85,3533.53) and (297.5,3532.18) .. (297.5,3530.52) -- cycle ;

\draw (242.65,3428.5) node [anchor=north west][inner sep=0.75pt]  [font=\small]  {$\delta _{a}$};

\draw (210.65,3467.5) node [anchor=north west][inner sep=0.75pt]  [font=\small]  {$\delta _{a-1}$};

\draw (456.65,3455.5) node [anchor=north west][inner sep=0.75pt]  [font=\small]  {$\delta _{b}$};

\draw (294.65,3508.5) node [anchor=north west][inner sep=0.75pt]  [font=\small]  {$\delta _{c}$};

\draw (380.65,3547.5) node [anchor=north west][inner sep=0.75pt]  [font=\small]  {$\delta _{d}$};

\draw (490.65,3492.5) node [anchor=north west][inner sep=0.75pt]  [font=\small]  {$\delta _{b+1}$};

\end{tikzpicture}

\caption{ The process of finding $K_5^{(4)}$ based on Claim \ref{blue} and Claim \ref{green}. }
\label{red-red}
\end{figure}
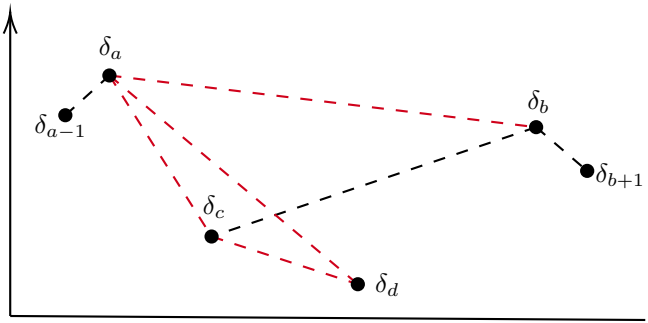

 From this, $\{v_{a-1},v_{a},v_{c},v_{c+1},v_{b+1}\}$ forms a $K_5^{(4)}$ by applying \textbf{($\Gamma_3$)} and \textbf{($\Lambda$)}, a contradiction. Hence, $\phi(\delta_c,\delta_b)=\text{red}$. Similarly, $\phi(\delta_d,\delta_b)=\text{red}$. Thus, $\{v_{c},v_{d},v_{d+1},v_{b+1},v_{b+2}\}$ forms a $K_5^{(4)}$ by \textbf{($\Gamma_1$)} and  \textbf{($\Lambda$)}, which is a contradiction.

This completes the proof of $\alpha_5(H)<2^{11}n^{11}+1$.

\section{Proof of Corollary \ref{main-results-2}}\label{stepping up}
In this section, we prove Corollary \ref{main-results-2} by applying a variant of the Erd\H{o}s-Hajnal stepping-up lemma. A closely related stepping-up lemma appears in \cite[Theorem 4.5 and Theorem 4.6]{F-H-L-L}, and its original proof is rather technical. To ensure the completeness of our own argument within this paper, we give a simpler lemma below, which is slightly weaker but suffices for our application.

For any set $S$, recall that $m(S)$ and $n(S)$ denote the number of local extrema and local monotones in the sequence $\delta(S)$, respectively. We have the following stepping-up lemma.

\begin{lemma}[Stepping-up Lemma]\label{step}
    For $k\ge 5$ and $s\ge k+7$, 
    \begin{align}\label{step-formula}
        f^{(k)}_{k+1,s}(2^N)<4(f^{(k-1)}_{k,s-1}(N))^2.
    \end{align}
\end{lemma}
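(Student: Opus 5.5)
We prove the stepping-up lemma with the standard Erd\H{o}s--Hajnal construction, in the variant used by Mubayi--Suk and Fan--Hu--Lin--Lu. Let $G$ be a $K^{(k-1)}_{s-1}$-free $(k-1)$-graph on $V(G)=\{0,1,\ldots,N-1\}$ with $\alpha_k(G)=f^{(k-1)}_{k,s-1}(N)=:n$. We build a $k$-graph $H$ on $V(H)=\{0,1,\ldots,2^N-1\}$, writing $\delta_i=\delta(v_i,v_{i+1})$ for a $k$-tuple $e=\langle v_1,\ldots,v_k\rangle$. We put $e\in E(H)$ if and only if one of the following holds:
\begin{enumerate}
\item[(a)] $\delta_1,\ldots,\delta_{k-1}$ is monotone and $\{\delta_1,\ldots,\delta_{k-1}\}\in E(G)$;
\item[(b)] $\delta_1,\ldots,\delta_{k-1}$ is not monotone and $m(e)$ is odd (or another parity or extremum-pattern rule).
\end{enumerate}
The exact rule in (b) is tuned to the two constraints below. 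Since $k\ge5$, a non-monotone $k$-tuple has $k-3\ge2$ interior $\delta$'s, so $m(e)$ carries real information. We then need two things: $H$ is $K^{(k)}_s$-free, and $\alpha_{k+1}(H)<4n^2$.

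For $K^{(k)}_s$-freeness, let $P=\langle v_1,\ldots,v_s\rangle$ span a clique in $H$.
\begin{itemize}
\item First, show that $\delta(P)$ has no monotone run of length $s-1$. If it did, Property V would give $s$ vertices of $G$ (the distinct $\delta$'s of the run) all of whose $(k-1)$-subsets are realized by monotone $k$-subsets of $P$, producing a $K^{(k-1)}_{s-1}$ in $G$.
\item More generally, we need to control how many local extrema $\delta(P)$ can contain. Choose $k$-subsets of $P$ whose $\delta$-sequences have a prescribed number of local extrema of the wrong parity; such subsets exist because one can delete vertices adjacent to an extremum to flip whether it survives, using Properties I--IV.
\item This forces every sub-configuration to be nearly monotone, so $P$ consists of a bounded number of monotone pieces glued near the maximum $\delta$. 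Each piece is bounded via $G$ being $K^{(k-1)}_{s-1}$-free.
\end{itemize}
The slack $s\ge k+7$ is spent exactly here: the few extra vertices near extrema are absorbed by allowing $s$ to exceed $k$ by a constant.

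For the independence bound, let $Q=\langle v_1,\ldots,v_m\rangle$ with $m=4n^2$ be $K^{(k)}_{k+1}$-free in $H$.
\begin{itemize}
\item If $\delta(Q)$ contains a monotone sequence of $n$ elements realized as in Property V, those $n$ values form a vertex set of $G$ of size $n=\alpha_k(G)$. Hence they contain a $K^{(k-1)}_k$ of $G$, which lifts by Property V and rule (a) to a $K^{(k)}_{k+1}$ in $H[Q]$, a contradiction.
\item Otherwise, by the Erd\H{o}s--Szekeres-type greedy local-maxima argument (as in the construction of $\Delta^{(t)}$ in Section~3, but only two layers, which accounts for the factor $(2n)^2=4n^2$), $\delta(Q)$ contains many local extrema in a controlled pattern.
\item From this pattern we extract $k+1$ vertices all of whose $k$-subsets have non-monotone $\delta$-sequences of the correct parity in rule (b), giving a $K^{(k)}_{k+1}$ and a contradiction.
\end{itemize}

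The main obstacle is designing rule (b) so that both directions work at once. Every $k$-subset of a suitable $(k+1)$-set built from an alternating zig-zag must be an edge, while no $s$-set with many extrema may be a clique. I would first try rule (b) as ``$\delta_1<\delta_2$, or the sequence has a local maximum at a prescribed position'', checking it against the zig-zag $\delta_1<\delta_2>\delta_3<\delta_4>\cdots$. If parity fails under deletion, I would fall back to a rule depending on whether $\delta_1$ exceeds $\delta_{k-1}$, as in the $\Gamma$ cases of Section~3.
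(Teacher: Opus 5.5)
Your proposal has a genuine gap. The whole proof turns on which non-monotone $k$-tuples are edges, and you never fix rule (b); the candidates you do name fail. Both ``$m(e)$ odd'' and ``$\delta_1<\delta_2$ (or \ldots)'' make an edge of every non-monotone $k$-tuple with exactly one local extremum and an initial ascent, and this alone destroys $K^{(k)}_s$-freeness.

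Let $T$ span a $K^{(k-1)}_{s-2}$ in $G$; nothing in the hypotheses excludes this. Take $P=\langle v_1,\ldots,v_s\rangle$ with $\delta_1<\cdots<\delta_{s-2}$ running through $T$ and $\delta_{s-1}<\delta_{s-2}$. By Property II, every $k$-subset of $P$ has a unimodal $\delta$-sequence, and there are two kinds:
\begin{enumerate}
\item[(1)] The monotone ones are increasing with all entries in $T$, since the descending side has only two entries. They are edges by (a).
\item[(2)] The others have $m=1$ and begin with an ascent, so they are edges under either of your rules.
\end{enumerate}
Hence $P$ spans a $K^{(k)}_s$.

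The same example shows that your clique plan points the wrong way. Monotone pieces shorter than $s-1$ are invisible to the $K^{(k-1)}_{s-1}$-freeness of $G$, so once $m(P)\ge 1$ the contradiction must come from the edge rule alone. Cliques are then forced toward zigzags, not toward ``nearly monotone'' shapes. A smaller slip: in the independence step you need a monotone run of $n+1$ values, not $n$, because an $n$-set can be $k$-independent when $n=\alpha_k(G)$.

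The missing idea is the paper's rule: a non-monotone $k$-tuple is an edge if and only if $m(e)=k-3$, or $k\ge 6$ and $m(e)=k-4$. These are exactly the values taken by the $k$-subsets of a zigzag $(k+1)$-set. Deleting an end vertex gives $k-3$, and for $k\ge 6$ deleting a suitable middle vertex gives $k-4$.

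With this rule, the independence bound is a plain count. Among the $4n^2-1$ $\delta$'s, either some run of $n+1$ is monotone and lifts a $K^{(k-1)}_k$ of $G$ via (a), or there are $k$ consecutive alternating local extrema $\delta_{j_1}>\delta_{j_2}<\cdots$. In the latter case, $v_{j_1}$, $v_{j_k+1}$ and both endpoints of each local minimum span a $K^{(k)}_{k+1}$. For $k=5$ the middle local maximum is also chosen larger than its two neighbouring maxima, which is possible unless the local maxima contain a monotone run of length $n+1$.

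For $K^{(k)}_s$-freeness, every $k$-tuple with $1\le m\le k-5$ is a non-edge, and for $k=5$ every $k$-tuple with $m=1$ is. If $m(P)=0$ one uses $G$. Otherwise, four consecutive monotone $\delta$'s (three for $k=5$) would extend to a $k$-window through an extremum with such a forbidden $m$, so no such run exists. For $k=5$, two consecutive local maxima then give a $5$-tuple with $m=1$. For $k\ge 6$, a case analysis inside $v_1,\ldots,v_{13}$ produces a $6$-set with one local extremum and two local monotones. Padding it with $v_{14},\ldots,v_{k+7}$ gives a $k$-set with $1\le m\le k-5$, and this padding is exactly where $s\ge k+7$ is used.
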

Since $f^{(k)}_{k+1,s}(N)\ge (\log_{(k-2)}N)^{\Omega(1)}$ for all $k\ge3$ and $s\ge k+2$ by (\ref{low-bound}), the lower bound stated in Corollary \ref{main-results-2} is immediate. Thus, it suffices to prove the upper bound, which follows immediately from the above stepping-up lemma by noting that 
\begin{align*}
    f^{(k)}_{k+1,s}(N)\le f^{(k)}_{k+1,s}(2^{\lceil \log N\rceil})\overset{(\ref{step-formula})}{<}4(f^{(k-1)}_{k,s-1}(\lceil \log N\rceil))^2
    \le (\log_{(k-2)}N)^{O(1)}
\end{align*}
for all $k\ge 5$ and $s\ge k+7$. Now, we give a proof of Lemma \ref{step}.
\medskip

\noindent\textit{Proof of Lemma \ref{step}.~}From the definition of $f^{(k-1)}_{k,s-1}(N)$, there is a $K^{(k-1)}_{s-1}$-free $(k-1)$-graph $G'$ on $\{0,1,\ldots,N-1\}$ with $\alpha_{k}(G')=f^{(k-1)}_{k,s-1}(N)$. We use $G'$ to construct a $K^{(k)}_{s}$-free $k$-graph $G$ on $V (G)=\{0, 1,\ldots, 2^{N}-1\}$ with $\alpha_{k+1}(G) < 4(\alpha_{k}(G'))^2$.  We
define the edges of $G$ as follows. For any $k$-tuple $e=\langle v_1,\ldots, v_k\rangle$ of $V(G)$, set $e\in E(G)$ if and
only if one of the following holds:
\begin{enumerate}
    \item[(i)] $m(e) = 0$ (i.e., $\delta_1,\ldots, \delta_{k-1}$ form a monotone sequence) and $(\delta_1,\ldots, \delta_{k-1})\in E(G')$.

    \item[(ii)] $m(e) = k-3$.

    Furthermore, if $k\ge 6$, then
     
    \item[(iii)] $m(e) =k-4$.
\end{enumerate}

We have the following property.
\medskip

\textbf{Property VI.} For any $v_1 < \cdots< v_r$, suppose that $\delta_1,\ldots, \delta_{r-1}$ form a monotone sequence. If every $k$-tuple in $\langle v_1,\ldots,v_r\rangle$ is in $E(G)$, then every $(k-1)$-tuple in $\{\delta_1,\ldots, \delta_{r-1}\}$
is in $E(G')$.
\medskip

First, we show that $\alpha_{k+1}(G) < 4(\alpha_{k}(G'))^2$.
Set $m = 4n^2$, where $n = \alpha_{k}(G')$. Assume for contradiction that there exist vertices $v_1 <\dots < v_m$ that induce
a $(k+1)$-independent set in $G$. If the corresponding sequence $\delta_1,\ldots, \delta_{m-1}$ contains fewer than $4n$ local extrema, then there exist some $j$ such that $\{\delta_p\}_{p=j}^{j+n}$ form a monotone sequence. Without loss of generality, we assume
that this sequence is monotone decreasing. Since $n=\alpha_{k}(G')$,  $\{\delta_p\}_{p=j}^{j+n}$ contain a copy of $K^{(k-1)}_k$ in $G'$, say this copy is given by $\delta_{j_1},\ldots,\delta_{j_k}$, then by Property VI, $\{ v_{j_1},\ldots,v_{j_k},v_{{j_k}+1}\}$ forms a $K^{(k)}_{k+1}$ in $G$, a contradiction. Thus we may assume that the sequence $\delta_1,\ldots, \delta_{m-1}$ contains at least $4n$ local extrema. 
Consequently, we can find a subsequence of consecutive local extrema $\delta_{j_1}>\delta_{j_2}<\delta_{j_3}>\delta_{j_4}<\delta_{j_5}>\cdots<\delta_{j_{2i-1}}>\delta_{j_{2i}}<\cdots\delta_{j_{k}}$, where $\delta_{j_{2i}}$ are local minima and $\delta_{j_{2i+1}}$ are local maxima for all $i\in \{1,2,\ldots, \lfloor k/2 \rfloor-1\}$. 
(In particular, if $k=5$, we can find the above sequence such that $\delta_{j_3}>\delta_{j_i}$ for $i\in \{1,5\}$ when $\delta_{j_3}$ is chosen as a local maximum among all local maxima in the sequence. Otherwise, there would be a monotone sequence of length $n+1$ among the local maxima, which, by the similar argument as above, yields a contradiction with Property~VI.) 
Thus, we can check that all $k$-tuples in $\{v_{j_1},v_{j_2},v_{j_2+1},\dots,v_{j_{2i}},v_{j_{2i}+1}  \ldots,v_{j_{k}+1}\}$ are edges by (ii) or (iii), which form a copy of $K_{k+1}^{(k)}$ in $G$, a contradiction. 

Suppose, to the contrary, that there is a set $P=\langle v_1,\ldots,v_{s} \rangle$ that induces a $K^{(k)}_{s}$ in $G$. If $m(P)=0$, then we can find a $K_{s-1}^{(k-1)}$ in $G'$ due to (i) and Property VI, a contradiction. Let $\delta_i=\delta(v_i,v_{i+1})$ for all $i\in [s-1]$. 
We now turn to the case $m(P)\ge1$ and we need the following observation.

\begin{observation} \label{k-obser}
    For $k\ge 6$, there is no $\ell\in[s-4]$ such that $\{\delta_\ell,\delta_{\ell+1},\delta_{\ell+2},\delta_{\ell+3}\}$ is monotone. Moreover, if $k=5$, then there is no $\ell\in[s-3]$ such that $\{\delta_\ell,\delta_{\ell+1},\delta_{\ell+2}\}$ is monotone.
\end{observation}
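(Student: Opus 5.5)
The plan is a direct contradiction: exhibit a single $k$-tuple of $P$ whose $\delta$-pattern matches none of (i), (ii), (iii). The point is that a long monotone run forces several local monotones inside a $k$-tuple, while the extremum guaranteed by $m(P)\ge 1$ rules out type (i).

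First, the counting. For a $k$-tuple $e$ the sequence $\delta(e)$ has $k-1$ terms and $k-3$ interior positions, so $m(e)+n(e)=k-3$. If $n(e)\ge 2$, then $m(e)\le k-5$, which excludes (ii) and (iii). If moreover $m(e)\ge1$, then (i) is excluded as well, so $e\notin E(G)$. For $k=5$ only (i) and (ii) are available, that is $m(e)\in\{0,2\}$. Hence $n(e)\ge 1$ together with $m(e)\ge 1$ already forces $e\notin E(G)$.

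Now suppose, for $k\ge 6$, that $\delta_\ell,\dots,\delta_{\ell+3}$ is monotone, say increasing; the decreasing case is symmetric. Extend it to a maximal consecutive monotone run $\delta_p<\dots<\delta_q$ with $q-p\ge 3$. Since $m(P)\ge 1$, the whole sequence $\delta_1,\dots,\delta_{s-1}$ is not monotone, so the run is not everything. Hence either $q<s-1$, in which case $\delta_q>\delta_{q+1}$ by maximality and Property I, or $p>1$, in which case $\delta_{p-1}>\delta_p$. Take the first case; the second is the mirror image.

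Consider the six consecutive vertices $v_{q-3},\dots,v_{q+2}$. Their $\delta$-sequence is
$$(\delta_{q-3}<\delta_{q-2}<\delta_{q-1}<\delta_q>\delta_{q+1}),$$
so $\delta_{q-2}$ and $\delta_{q-1}$ are local monotones and $\delta_q$ is a local maximum. Add any $k-6$ further vertices of $P$ to form a $k$-tuple $e$; this is possible since $s\ge k+7$. The six chosen vertices remain consecutive in $e$, so these five $\delta$-values and their mutual comparisons appear unchanged in $\delta(e)$. Therefore $n(e)\ge 2$ and $m(e)\ge 1$, and by the counting $e\notin E(G)$, contradicting that $P$ induces $K^{(k)}_s$.

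For $k=5$, suppose $\delta_\ell<\delta_{\ell+1}<\delta_{\ell+2}$ and maximalize the run in the same way. Take the five consecutive vertices $v_{q-2},\dots,v_{q+2}$, whose pattern is $(\delta_{q-2}<\delta_{q-1}<\delta_q>\delta_{q+1})$. This gives $n(e)=1$ and $m(e)=1\notin\{0,2\}$, so again $e\notin E(G)$, a contradiction.

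The only point needing care is that the extremum must sit at the end of the monotone run and be adjacent to it. This is what maximality of the run delivers, and it lets the added vertices be placed anywhere without destroying the local pattern. I do not expect any step beyond this to be a real obstacle.
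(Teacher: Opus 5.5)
Your proof is correct and follows the paper's argument. You extend to a maximal monotone run and use $m(P)\ge 1$ to put a local extremum at one end; then you exhibit a $k$-tuple with $n\ge 2$ and $m\ge 1$ (so $1\le m\le k-5$), or with $m=1$ when $k=5$, which satisfies none of (i)--(iii). The only difference is cosmetic: you pad a block of consecutive vertices with arbitrary extra vertices, whereas the paper uses a window of $k$ consecutive vertices and so needs the case split $j\ge k-2$ versus $j<k-2$, which your padding avoids.
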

\noindent\textit{Proof of Observation \ref{k-obser}.~}We first consider $k\ge6$. Suppose such an $\ell$ exists. Consider the longest consecutive monotone sequence within $\delta(P)$ that contains $\{\delta_\ell,\delta_{\ell+1},\delta_{\ell+2},\delta_{\ell+3}\}$; denote it by $\{\delta_i,\delta_{i+1},\ldots,\delta_{j-1},\delta_j\} $. By definition, we have $i\le \ell$ and $j\ge \ell+3$. Since $m(P)\ge 1$, $\{\delta_i,\delta_{i+1},\ldots,\delta_{j-1},\delta_j\}\neq \delta(P)$. Therefore, $\delta_i$ or $\delta_j$ is a local extremum. Without loss of generality, we assume that $\delta_{j}$ is a local extremum. If $j\ge k-2$, then $\{v_{j-k+3},v_{j-k+4},\ldots, v_j,v_{j+1},v_{j+2}\}$ cannot be an edge, because $1\le m(\{v_{j-k+3},v_{j-k+4},\ldots, v_j,v_{j+1},v_{j+2}\})\le k-5$, which is a contradiction. If $j<k-2$, then $\{v_1,\ldots,v_j,v_{j+1}, v_{j+2}, \ldots,v_k\}$ is likewise not an edge because $1\le m(\{v_1,\ldots,v_j,v_{j+1},v_{j+2}, \ldots,v_k\})\le k-5$, again yielding a contradiction. 

If $k=5$ and such an $\ell$ exists, then we can use a similar argument as above to extend $\{\delta_\ell,\delta_{\ell+1},\delta_{\ell+2}\}$ to a $5$-sequence $f$ such that $m(f)=1$. Consequently, $f$ is not an edge, which yields a contradiction.\hfill$\Box$

For $k=5$, it follows from Observation \ref{k-obser} that $\{\delta_i\}_{i=2}^{s-2}$ are all local extrema.
Thus, we can choose two consecutive local maxima $\delta_{m_1}$ and $\delta_{m_2}$, then $\delta_{m_1}\neq\delta_{m_2}$ from Property I and II.
Without loss of generality, we assume that $\delta_{m_1}<\delta_{m_2}$, now $\{v_{{m_1}-1},v_{m_1},v_{{m_1}+1},v_{{m_2}+1},v_{{m_2}+2}\}$ is not an edge since $\delta(v_{{m_1}-1},v_{m_1},v_{{m_1}+1},v_{{m_2}+1},v_{{m_2}+2})=(\delta_{{m_1}-1}<\delta_{m_1}<\delta_{m_2}>\delta_{{m_2}+1})$, a contradiction.

In the following, we can assume that $k\ge6$ and we will use the following claim to derive a contradiction.

\begin{claim}\label{k-claim}
    There exists a subsequence $S\subset\{v_i\}_{i=1}^{13}$ with $|S|=6$ such that $m(S)=1$ and $n(S)=2$.
\end{claim}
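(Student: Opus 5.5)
We work inside $P=\langle v_1,\ldots,v_s\rangle$ with $s\ge k+7\ge 13$, $k\ge 6$ and $m(P)\ge 1$. By Observation~\ref{k-obser}, no four consecutive $\delta$'s are monotone. So among $\delta_1,\ldots,\delta_{12}$ every window of four consecutive terms contains a local extremum, and local extrema occur at gaps of at most $3$. The plan is to choose the six vertices of $S$ so that $\delta(S)$ has exactly one interior local extremum and two interior local monotones. This means the five entries of $\delta(S)$ form a pattern such as $x_1<x_2<x_3>x_4>x_5$, or $x_1<x_2<x_3<x_4>x_5$ (or the mirror versions), with exactly one interior peak or valley.

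\textbf{Step 1: locate a peak.} Among the indices $2,\ldots,12$ we take a local maximum $\delta_p$ of $\delta(P)$ with $p$ roughly central, say $p\in[4,9]$. Such a $p$ exists because extrema alternate between maxima and minima and are at most $3$ apart.

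\textbf{Step 2: build a monotone approach on each side using Property II.} Going left from $\delta_p$, we take $\delta_q$, the largest element among $\delta_{q'},\ldots,\delta_{p-1}$ for a suitable window. Then $\delta(v_{q'},v_{q+1},v_p)$-type merges produce values of the form $\delta(v_x,v_y)=\max_{x\le i<y}\delta_i$. Concretely, we pick vertices $u_1<u_2<u_3=v_p<u_4=v_{p+1}<u_5<u_6$ with $u_1,u_2\in\{v_1,\ldots,v_{p-1}\}$ and $u_5,u_6\in\{v_{p+2},\ldots,v_{13}\}$. The gap values are then block maxima.

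We need $\delta(u_1,u_2)<\delta(u_2,v_p)<\delta_p$, which amounts to a strictly increasing chain of block maxima going up to $\delta_p$. We get it by taking the last position $r<p-1$ such that $\delta_r$ exceeds $\max(\delta_{r+1},\ldots,\delta_{p-1})$. If $\max(\delta_1,\ldots,\delta_{p-1})$ is attained at index $r_0<p-1$, we use $u_1=v_{r_0}$, $u_2=v_{r_0+1}$ and the increasing condition fails. In that case we reverse roles and put the peak at the left end instead. Symmetrically, on the right we want $\delta_p>\delta(v_{p+1},u_5)>\delta(u_5,u_6)$. Property~I guarantees that the relevant adjacent block maxima are distinct, so all comparisons are strict.

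\textbf{Step 3: case analysis.} The main obstacle is that the "record" structure may be one-sided. Suppose the prefix maxima of $\delta_1,\ldots,\delta_{p-1}$ read from the right never increase twice. Then we instead use a valley, or a peak shifted toward the other monotone side, so that the two monotone steps lie on one side, giving a pattern like $x_1<x_2<x_3<x_4>x_5$.

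The length bound $13$ comes in here. With $12$ gaps and extrema spaced at most $3$ apart, the sequence contains at least three alternating extrema, and each long enough block can be merged by Property~II into a monotone run of block maxima. I would first try to take $\delta_p$ as the maximum of $\delta_1,\ldots,\delta_{12}$ (unique by Property~III). Then the records to its left (the left-to-right maxima up to $p$) form an increasing sequence of block maxima, and the records to its right (right-to-left maxima) form a decreasing sequence. If the left side has at least $2$ records and the right side at least $1$, or the left has $3$ and the right has $0$ (with one more extremum used for the endpoint), we obtain $S$. The leftover configurations, where $p$ sits at an end, are handled by applying the same record argument to the sub-maximum on the long side, which is at least $6$ gaps long. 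This bookkeeping is the part I expect to be delicate. In every case we then verify via Fact~\ref{fact} that the chosen five-term sequence has exactly one interior extremum, so $m(S)=1$ and $n(S)=2$.
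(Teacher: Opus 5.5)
\textbf{There is a genuine gap.} The case analysis you defer as ``delicate bookkeeping'' is the entire content of the claim, and the success conditions you do state are wrong.

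\textbf{The stated success conditions fail.} $\delta(S)$ has five entries. So making the global maximum $\delta_p$ the only interior extremum needs two increasing block maxima on its left and two decreasing ones on its right, or a $3+1$ split. Two left records and one right record give only $(x_1<x_2<\delta_p>x_4)$, i.e.\ five vertices, and this cannot always be extended.

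For example, take $(\delta_1,\dots,\delta_{12})=(1,20,2,18,4,16,6,14,8,12,200,100)$. It has no four consecutive monotone terms, and $p=11$ has two left records ($1,20$) and one right record. However:
\begin{itemize}
\item no starting point on the left gives three increasing block maxima ending before $\delta_{11}$;
\item only $\delta_{12}$ lies to the right;
\item adding any $v_x$ with $x\in[3,10]$ to $\{v_1,v_2,v_{11},v_{12},v_{13}\}$ yields $(1<20>y<200>100)$ with $m=3$.
\end{itemize}
A valid $S$ does exist here, e.g.\ $\{v_2,v_3,v_5,v_6,v_7,v_{12}\}$ with $\delta(S)=(20>18>4<16<200)$. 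Its extremum is a valley, not a peak at $\delta_p$, and nothing in your scheme produces it; you mention ``use a valley'' but never say which one or why it exists.

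Likewise, ``three left records and none on the right'' forces $p=12$. Then $\delta_p$ is the last possible entry of $\delta(S)\subset\delta(\{v_1,\dots,v_{13}\})$, and records alone give $m(S)=0$. The extra extremum would have to come from a descent at the front, which you do not construct.

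\textbf{What is missing.} You need an argument that turns Observation~\ref{k-obser} into one of finitely many admissible patterns in every case. Your skeleton (global maximum, then the sub-maximum on the long side) matches the paper's, but the paper then makes the following complete case split. Let $\delta_{j_1}$ be the maximum of $\delta_1,\dots,\delta_{12}$, with $j_1\le 6$ by symmetry, and let $\delta_{j_2}$ be the maximum of $\delta_{j_1+1},\dots,\delta_{12}$.
\begin{itemize}
\item If $j_2-j_1\ge5$, some $\delta_\ell$ with $j_1<\ell-1<\ell+1<j_2$ is not a local maximum. Then $S=\{v_{j_1},v_{\ell-1},v_\ell,v_{\ell+1},v_{\ell+2},v_{j_2+1}\}$ has a valley as its only extremum.
\item Otherwise, if $j_1=1$, a local minimum at some $\ell\in[7,11]$ gives $(\delta_{j_1}>\delta_{j_2}>\delta_{\ell-1}>\delta_\ell<\delta_{\ell+1})$.
\item If $j_1\ge2$ and $j_2\le 9$, either $\delta_{j_2+1}<\delta_{j_2+2}<\delta_{j_2+3}$ gives a valley pattern, or a descent $\delta_\ell>\delta_{\ell+1}$ with $\ell\in[j_2+1,j_2+2]$ gives $(\delta_{j_1-1}<\delta_{j_1}>\delta_{j_2}>\delta_\ell>\delta_{\ell+1})$.
\item If $j_2=10$ (so $j_1=6$), an ascent among $\delta_1,\dots,\delta_4$ is used.
\end{itemize}
In several of these cases the unique extremum of $S$ is a valley, so they are not reached by flanking a maximum with record chains. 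You would need to carry out a complete case split of this kind, rather than assert that the record argument covers everything; the example above shows it does not.
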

\noindent\textit{Proof of Claim \ref{k-claim}.~}Let $\delta_{j_1}$ denote the unique largest element in $\{\delta_i\}_{i=1}^{12}$. Without loss of generality, we assume that $j_1\in[6]$. 
Let $\delta_{j_2}$ denote the unique largest element in $\{\delta_i\}_{i=j_1+1}^{12}$, then $\delta_{j_1}>\delta_{j_2}$.

If there exists a local monotone $\delta_{\ell}$ between $\delta_{j_1}$ and $\delta_{j_2}$, then without loss of generality we assume that $\delta_{\ell-1}<\delta_{\ell}<\delta_{\ell+1}$ and $j_1<\ell-1<\ell+1<j_2$.
Since $\delta(v_{j_1},v_{\ell-1},v_{\ell},v_{\ell+1},v_{\ell+2},v_{j_2+1})=(\delta_{j_1}>\delta_{\ell-1}<\delta_{\ell}<\delta_{\ell+1}<\delta_{j_2})$, we can take $S=\{v_{j_1},v_{\ell-1},v_{\ell},v_{\ell+1},v_{\ell+2},v_{j_2+1}\}$ to satisfy $m(S)=1$ and $n(S)=2$.  If there exists a local minimum  $\delta_{\ell}$ between $\delta_{j_1}$ and $\delta_{j_2}$, that is, $\delta_{\ell-1}>\delta_{\ell}<\delta_{\ell+1}$ and $j_1<\ell-1<\ell+1<j_2$, then $S=\{v_{j_1},v_{\ell-1},v_{\ell},v_{\ell+1},v_{\ell+2},v_{j_2+1}\}$ satisfies the requirement, noting that $\delta(v_{j_1},v_{\ell-1},v_{\ell},v_{\ell+1},v_{\ell+2},v_{j_2+1})=(\delta_{j_1}>\delta_{\ell-1}>\delta_{\ell}<\delta_{\ell+1}<\delta_{j_2})$. Thus, it is enough to restrict attention to  $j_2-j_1\le4$. Otherwise, $j_2-j_1\ge5$, then there exists either a local monotone or a local minimum. In particular, if $j_1=1$, then $j_2\le 5$. Recall that there is no $\ell\in[6,9]$ such that $\{\delta_\ell,\delta_{\ell+1},\delta_{\ell+2},\delta_{\ell+3}\}$ is monotone from Observation \ref{k-obser}. 
Thus, there exists $\ell\in[7,11]$ such that $\delta_{\ell-1}>\delta_\ell<\delta_{\ell+1}$. For this $\ell$, the set $S=\{v_{j_1},v_{j_2},v_{\ell-1},v_{\ell},v_{\ell+1},v_{\ell+2}\}$ yields the desired configuration.

Hence, we only need to consider $j_2\le j_1+4$, where $j_1\in [2,6]$. We can immediately conclude that $j_2\le 10$. Let us first consider the case where $j_2\le 9$. Note that there exists $\ell\in[j_2+1,j_2+2]$ such that $\delta_{\ell}>\delta_{\ell+1}$. Otherwise, we have $\delta(v_{j_1},v_{j_2},v_{j_2+1},v_{j_2+2},v_{j_2+3},v_{j_2+4})=(\delta_{j_1}>\delta_{j_2}>\delta_{j_2+1}<\delta_{j_2+2}<\delta_{j_2+3})$, and $S=\{v_{j_1},v_{j_2},v_{j_2+1},v_{j_2+2},v_{j_2+3},v_{j_2+4}\}$ would satisfy the requirement. For this $\ell$, we have $\delta(v_{j_1-1},v_{j_1},v_{j_2},v_{\ell},v_{\ell+1},v_{\ell+2})=(\delta_{j_1-1}<\delta_{j_1}>\delta_{j_2}>\delta_\ell>\delta_{\ell+1})$, and we can take $S=\{v_{j_1-1},v_{j_1},v_{j_2},v_{\ell},v_{\ell+1},v_{\ell+2}\}$, where we note $j_1\ge 2$.
Then, we consider the case where $j_2= 10 $, it follows that $j_1=6$. We may assume that $\delta_{\ell}<\delta_{\ell+1}$ for some $\ell\in[3]$. Otherwise, we can take $S=\{v_1,v_2,v_3,v_4,v_5,v_{7}\}$ since $\delta(v_1,v_2,v_3,v_4,v_5,v_{7})=(\delta_1>\delta_2>\delta_3>\delta_4<\delta_{6})$. For this $\ell$, we have $\delta(v_\ell,v_{\ell+1},v_{\ell+2}, v_{7},v_{12},v_{13})=(\delta_{\ell}<\delta_{\ell+1}<\delta_{6}>\delta_{10}>\delta_{12})$, and we can take $S=\{v_\ell,v_{\ell+1},v_{\ell+2}, v_{7},v_{12},v_{13}\}$.
\hfill$\Box$
\medskip

For $k\ge 6$, take $S=\{u_1,\ldots,u_6\}$ from Claim \ref{k-claim}. Then $\{u_1,\ldots,u_6,v_{14},\ldots,v_{k+7}\}$ cannot be an edge because $$1\le m(\{u_1,\ldots,u_6,v_{14},\ldots,v_{k+7}\})\le k-5.$$ This yields a contradiction. 

This completes the proof of Lemma \ref{step}. \hfill$\Box$

\section{Concluding Remarks}
In this paper, we prove $f^{(4)}_{5,s}(N)= (\log \log N)^{\Theta(1)}$ for every  $s\ge 11$
by applying the stepping-up technique to a graph provided by a new probabilistic
construction. 
This partially confirms the conjecture of Mubayi and Suk~\cite{M-S-5},  but extending it to the remaining values $s\in[6,10]$ would likely require additional conditions for the 2-graph and, more importantly, new ideas.

On the other hand, we also consider a related question for a non-complete forbidden configuration. Given a $k$-graph $F$, an \textit{$F$-independent set} in a $k$-graph $H$ is a vertex subset $S$ such that $H[S]$ contains no copy of $F$. The \textit{generalized Erd\H{o}s-Rogers function} $f^{(k)}_{F,s}(N)$ denotes the largest $m$ such that every $K_s^{(k)}$-free $k$-graph on $N$ vertices contains an $F$-free induced subgraph on $m$ vertices. Here $F$-free means that it does not contain $F$ as a subgraph. Namely, $$f^{(k)}_{F,s}(N)=\min\{\alpha_F(H): |V(H)|=N~\text{and}~H~\text{is}~K_s^{(k)}\text{-free}\}.$$

Recall that the off-diagonal Ramsey number $r_k(s,n)$ is the minimum $N$ such that every red/blue coloring of the edges of $K_N^{(k)}$ results in a monochromatic red copy of $K^{(k)}_s$ or a monochromatic blue copy of $K^{(k)}_n$, where $k,s$ are fixed and $n$ tends to infinity. Let $5^-=K^{(4)}_5-e$ denote the hypergraph obtained from $K^{(4)}_5$ by removing one edge. We are also interested in determining the magnitude of $f^{(4)}_{5^-,s}(N)$ for every  $s\ge 5$. Since $r_4(s,n)\le 2^{2^{n^c}}$ for  every  $s\ge 5$ from \cite{E-R-2}, we have $f^{(4)}_{5^-,s}(N)\ge f^{(4)}_{4,s}(N)\ge (\log\log N)^{\Omega(1)}$ by noting (\ref{R-EG}). Thus, (\ref{con-2}) suggests $f^{(4)}_{5^-,s}(N)=(\log\log N)^{\Theta(1)}$ for  every $s\ge 6$. Theorem \ref{main-results} verifies this for every  $s\ge 11$, and in \cite{D-H-L-W-2} we use the framework of this paper to prove it for every $s\ge 6$. 
More importantly, refining the methods introduced in this paper could help settle whether $f^{(4)}_{5^-,5}(N)=(\log\log N)^{O(1)}$ holds. A positive answer would confirm the conjectured double-exponential lower bound on $r_4(5,n)$.

\end{spacing}
\end{document}